\theoremstyle{plain}
\newtheorem{Thm}{Theorem}[section]
\newtheorem{Cor}[Thm]{Corollary}
\newtheorem{Lem}[Thm]{Lemma}
\theoremstyle{definition}
\newtheorem{Defn}[Thm]{Definition}
\newtheorem{Rem}[Thm]{Remark}
\numberwithin{equation}{section}
\title{Non-commutative deformations of tilting bundles and the derived McKay correspondence}
\author{Dedicated to Professor Dmitri Orlov for his sixtieth birthday \\ \\
Yujiro Kawamata}
\date{}                                           
\begin{document}
\maketitle

\begin{abstract}
We prove that the tilting bundle and the derived McKay correspondence extends under formal non-commutative deformations
by using \v Cech cohomology of non-commutative schemes.

14A22, 14D15, 14E16. 
\end{abstract}

%%%%%%%%%%%%%%%%%%%%%%%%%%%%%%%%%%
%%%%%%%%%%%%%%%%%%%%%%%%%%%%%%%%%%
%%%%%%%%%%%%%%%%%%%%%%%%%%%%%%%%%%
\section{Introduction}

We defined twisted non-commutative (NC) deformations of algebraic varieties by gluing NC deformations of algebras
and described the semi-universal deformations by using the Hochschild cohomology groups in the case of smooth varieties in \cite{smooth}.
We constructed NC deformations of rational surface singularities of type $A_n$ explicitly and proved that the 
derived McKay correspondence extends under such deformations in \cite{An}.
In this paper, we treat the extension problem of tilting bundles for general NC deformations and prove that 
the derived McKay correspondence extends in general as long as it is given by a tilting bundle.

A Gorenstein singularity has sometimes a {\em crepant} resolution, a resolution of singularities which has no discrepancy of 
canonical divisors.
Its non-commutative version, {\em NC crepant resolution}, is defined in \cite{VdB}.  
A (commutative) crepant resolution is geometric, and NC crepant resolution is algebraic.
A derived McKay correspondence is an equivalence of derived categories between commutative and NC crepant resolutions.
In many important cases, such a correspondence is given by a {\em tilting bundle}.
For example, a tilting bundle exists when the crepant resolution is given by an equivariant Hilbert scheme.
Indeed a tilting bundle is given by the universal sheaf (\cite{KV}, \cite{BKR}).
On the other hand, we note that Wahei Hara found an example of a crepant resolution of a Gorenstein singularity 
which has no tilting bundle.

We treated only the NC deformations without twist in this paper.
It is reasonable because the vanishing of $H^2(\mathcal O)$, the tangent space corresponding to the twisting deformations, 
holds in many cases.
Moreover we treat only formal NC deformations unlike the case treated in  \cite{An}.
Here we note that NC deformations are usually only formal and not convergent in most cases.  

The category of quasi-coherent modules $\text{Qcoh}(A)$ on a twisted NC scheme $A$ becomes an abelian
category.
Indeed a homomorphism between quasi-coherent modules is a collection of homomorphisms between 
their component right modules over associative algebras, 
and the kernel and cokernel of the homomorphism is a collection of kernels and cokernels
of these homomorphisms (cf. \cite{An}).
Moreover $\text{Qcoh}(A)$ becomes a Grothendieck category (\cite{DLL}, \cite{An}).

In the case of usual commutative deformations, a locally free coherent sheaf $F^0$ extends along deformations
if $\text{Ext}^2(F^0,F^0) = 0$, and uniquely if $\text{Ext}^1(F^0,F^0) = 0$.
These vanishing property is a part of the definition of the tilting bundle, hence the tilting bundles always extend uniquely.
A locally free coherent sheaf $F^0$ on an algebraic variety $X^0$ is called a {\em tilting bundle}
if the following hold: (1) $\text{Ext}_{X^0}^p(F^0, F^0) = 0$ for $p > 0$; 
(2) if $x \not\cong 0$ for $x \in D^-\text{coh}(X^0)$, then $\text{Hom}_{X^0}(F^0, x[p]) \ne 0$ for some $p$.
 
In our NC situation, similar argument works.
The point of the proof is to calculate cohomology groups which is defined by using injective resolutions, 
and it will be done by extending the mechanism of
\v Cech cohomology groups.
Here we note that our module $F$ on an NC scheme is only defined as a presheaf and not a sheaf because of the 
non-commutativity.

The main result is the following:

\begin{Thm}
Let $f^0: X^0 \to \text{Spec}(B^0)$ be a proper morphism from an algebraic variety to an affine variety, and 
let $F^0$ be a tilting bundle on $X^0$.
Let $(R, \frak m)$ be an Artin local $k$-algebra with $R/\frak m = k$, 
and let $A = (A_i,\phi_{ij})$ be an NC deformation of $X^0$ over $R$.
Then there exists, uniquely up to isomorphism, a tilting bundle $F$ on $A$ which is flat over $R$ and 
$R/\frak m \otimes_R F \cong F^0$.
Moreover, for the endomorphism algebra $E = \text{Hom}_A(F, F)$,  
there is a $k$-linear equivalence of triangulated categories:
\[
\Phi: D^b\text{coh}(A) \to D^b\text{mod-}E
\]
given by $\Phi(x) = R\text{Hom}_A(F, x)$ with its quasi-inverse given by $\Psi(y) = y \otimes^{L}_E F$.
\end{Thm}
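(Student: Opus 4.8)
The plan is to split the statement into two tasks: first, the deformation-theoretic construction of $F$ together with its uniqueness and flatness; and second, the tilting equivalence, which I will deduce from the standard tilting formalism once $F$ is known to be a tilting bundle on $A$.

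For the construction I would induct on the length of $R$. Writing a small extension
\[
0 \to (t) \to R \to R'' \to 0, \qquad \mathfrak{m}\cdot t = 0,
\]
so that $(t) \cong k$, I may assume a flat tilting bundle $F''$ on $A'' = A \otimes_R R''$ with $F'' \otimes_{R''} k \cong F^0$ has already been produced. Deforming the locally free sheaf across this extension, the obstruction to extending $F''$ to a flat module $F$ on $A$ lies in $\text{Ext}^2_{X^0}(F^0, F^0)\otimes_k (t)$, and, when this vanishes, the extensions up to isomorphism form a torsor under $\text{Ext}^1_{X^0}(F^0, F^0)\otimes_k (t)$. Since $F^0$ is a tilting bundle, both groups vanish, yielding existence, uniqueness, and—because we are deforming a locally free sheaf over an Artin ring—flatness at each step, hence for $F$ itself.

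The delicate point is that these $\text{Ext}$ groups live on the NC scheme $A$, where $F$ is only a presheaf rather than a sheaf. This is exactly where the \v Cech mechanism is needed: using the affine cover underlying $A = (A_i, \phi_{ij})$, I would form the extended \v Cech complex of the internal $\text{Hom}$ presheaf $\mathcal{H}om_A(F, F)$ and identify its cohomology with the $\text{Ext}$ groups defined by injective resolutions in the Grothendieck category $\text{Qcoh}(A)$. Establishing that \v Cech cohomology computes derived $\text{Hom}$ in spite of the failure of the sheaf condition, and that the resulting obstruction and ambiguity classes agree with the commutative ones on $X^0$, is the principal technical obstacle; it is the content that the rest of the paper is set up to supply.

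For the equivalence I would first upgrade $F$ to a tilting bundle on $A$. Base change along $R \to k$, valid because $F$ is flat over $R$, gives $R\text{Hom}_A(F, F)\otimes^L_R k \cong R\text{Hom}_{X^0}(F^0, F^0)$, which is concentrated in degree $0$; a Nakayama and flatness argument over the Artin local ring $R$ then forces $R\text{Hom}_A(F, F)$ itself to be concentrated in degree $0$, so that $\text{Ext}^p_A(F, F) = 0$ for $p > 0$ and $E = \text{End}_A(F)$ is flat over $R$. Generation follows similarly: if $R\text{Hom}_A(F, x) = 0$ for some $x \in D^b\text{coh}(A)$, base change yields $R\text{Hom}_{X^0}(F^0, x\otimes^L_R k) = 0$, whence $x \otimes^L_R k \cong 0$ by property $(2)$ for $F^0$, and thus $x \cong 0$ by derived Nakayama over $R$. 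With $R\text{Hom}_A(F, F) = E$ concentrated in degree $0$ and $F$ a perfect generator of $D^b\text{coh}(A)$, the standard tilting theorem applies: the adjoint pair $\Phi = R\text{Hom}_A(F, -)$ and $\Psi = -\otimes^L_E F$ has invertible unit and counit, giving the asserted $k$-linear equivalence $D^b\text{coh}(A) \to D^b\text{mod-}E$.
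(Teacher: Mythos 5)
Your deformation-theoretic half is essentially the paper's own proof: the paper proves exactly your inductive step (Theorem \ref{extend F}) by a \v Cech-cocycle argument, placing the obstruction in $J \otimes H^2(X^0, \mathcal End(F^0)) = J \otimes \text{Ext}^2_{X^0}(F^0,F^0)$ and making the extensions a torsor under $J \otimes H^1$, both of which vanish since $F^0$ is tilting; it then gets $\text{Ext}^p_A(F,F)=0$ and flatness of $E$ from the \v Cech complex $K^{\bullet}$ (whose terms are $R$-free) together with a Tor spectral sequence and Nakayama over the Artin ring, and it proves generation as the contrapositive of your statement with the same ingredients (reduction mod $\frak m$ of the \v Cech resolution, top-degree Nakayama). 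Two remarks there: the obstruction/torsor groups live in commutative cohomology on $X^0$, so no identification of \v Cech cohomology with $\text{Ext}$ in $\text{Qcoh}(A)$ is needed for that step; and your base-change isomorphism $R\text{Hom}_A(F,F)\otimes^L_R k \cong R\text{Hom}_{X^0}(F^0,F^0)$ does not follow from flatness of $F$ alone --- it is precisely what the \v Cech computation ($K^{\bullet}\otimes_R R/\frak m \cong K_0^{\bullet}$ termwise) supplies, so the dependence should be routed through that rather than asserted.

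The genuine gap is in your final paragraph. ``The standard tilting theorem applies'' is not automatic here: to define $\Psi(y)=y\otimes^L_E F$ on $D^b\text{mod-}E$ and to run the Bondal--Rickard/Toda--Uehara argument between $D^b\text{coh}(A)$ and $D^b\text{mod-}E$, one must know that $E=\text{Hom}_A(F,F)$ is a (right) noetherian ring, so that every finitely generated $E$-module admits a resolution by finitely generated free modules and $D^b\text{mod-}E$ is well behaved. Unlike the commutative case, $E$ is not given as a finite module over a commutative noetherian base, so this is not free; the paper devotes a separate lemma to it, proved by induction on the length of $R$ along small extensions $0 \to J \to R' \to R \to 0$ (lifting generators of a right ideal $L'$ from $L'E$ and from $L' \cap JE' \subset J \otimes_k E^0$). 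One also needs that $\Phi$ carries bounded complexes to bounded complexes, which the paper extracts from the finiteness of the \v Cech complex. Without these two verifications your concluding step does not go through as stated; with them, your argument coincides with the paper's.
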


In \S 2 and \S 3, we recall the definitions of NC schemes, quasi-coherent modules on them and NC deformations (\cite{smooth}).
In \S 4, we construct a \v Cech resolution for a quasi-coherent modules.
This enable us to handle cohomology groups easily.
Then in \S 5, we prove the extendability of locally free modules and tilting bundles along NC deformations, and finally 
prove the extension theorem of the derived McKay correspondence in \S 6.

NC will be an abbreviation of \lq not necessarily commutative' in this paper.
We work over the base field $k$ throughout the paper.

The author would like to thank NCTS of National Taiwan University where 
this work was partly done while the author visited there.
This work is partly supported by JSPS Kakenhi 21H00970.

%%%%%%%%%%%%%%%%%%%%%%%%%%%%%%%%%%
%%%%%%%%%%%%%%%%%%%%%%%%%%%%%%%%%%
%%%%%%%%%%%%%%%%%%%%%%%%%%%%%%%%%%
\section{NC scheme}

We recall the definitions of NC schemes and quasi-coherent sheaves on them.
We only consider the version where there is no twisting.

\begin{Defn}
Let $I$ be a poset such that the minimum exists for any two elements.
We denote by $i \cap j = \min \{i,j\}$.
An {\em NC scheme} $A = (A,\phi) = (A_i, \phi_{ij})$ over $I$ is given by a collection of unital associative NC algebras $A_i$ for $i \in I$, 
and gluing homomorphisms $\phi_{ij}: A_j \to A_i$ for $i < j$, which satisfy the following conditions:

\begin{itemize}
\item (flatness) $\phi_{ij}$ is flat for $i < j$.

\item (birationality) The natural homomorphism $A_i \otimes_{A_j} A_i \to A_i$ given by
$a_i \otimes a_i' \mapsto a_ia_i'$ for $a_i,a_i' \in A_i$ is bijective for $i < j$.

\item (cocycle) $\phi_{ij}\phi_{jk} = \phi_{ik}$ for $i < j < k$.
\end{itemize}
An equivalence between twisted NC schemes $(A,\phi) \sim (A',\phi')$ over $I$ is given by
algebra isomorphisms $f_i: A_i \to A'_i$ 
such that $\phi'_{ij}f_j = f_i\phi_{ij}$.
\end{Defn}

\begin{Defn}
A {\em quasi-coherent module} $M = (M, \psi) = (M_i, \psi_{ij})$ on an NC scheme $A$ over a poset $I$ is a 
collection of right $A_i$-modules $M_i$ for $i \in I$, and
gluing isomorphisms $\psi_{ij}: M_j \otimes_{A_j} A_i \stackrel{\cong}{\longrightarrow} M_i$ for $i < j$, 
which satisfy the following conditions:

\begin{itemize}
\item $\psi_{ij}(mx \otimes y) = \psi_{ij}(m)\phi_{ij}(x)y$ \, for $m \in M_j$, $x \in A_j$ and $y \in A_i$.

\item $\psi_{ij}(\psi_{jk} \otimes_{A_j} A_i) = \psi_{ik}$ \, on \, $M_k \otimes_{A_k} A_j \otimes_{A_j} A_i = M_k \otimes_{A_k} A_i$ 
for $i < j < k$.
\end{itemize}
$M$ is said to be {\em coherent} if all $M_i$ are finitely generated modules.
Moreover if all $M_i$ are free modules of rank $r$, then $M$ is called a {\em locally free}
module of rank $r$.
We note that a locally free module of rank 1 is not invertible, because the $A_i$ are NC.
\end{Defn}

A homomorphism $h: M \to N$ of quasi-coherent modules consists of homomorphisms
$h_i: M_i \to N_i$ as right $A_i$-modules which are compatible with gluing isomorphisms $\psi_{ij}^M$ and $\psi_{ij}^N$.
The category of quasi-coherent modules $\text{Qcoh}(A)$ becomes an abelian category, where 
$\text{Ker}(h)$ and $\text{Coker}(h)$ are given by the collections of 
$\text{Ker}(h_i)$ and $\text{Coker}(h_i)$ thanks to the flatness of the gluing homomorphisms $\phi_{ij}$.
The direct limit $\varinjlim_{\lambda} M(\lambda)$ of modules is also given by the collection of the components
$\varinjlim_{\lambda} M(\lambda)_i$, 
and $\text{Qcoh}(A)$ becomes a Grothendieck category (\cite{DLL}, \cite{An}).

\begin{Rem}
(1) We changed the direction of an equality $i < j$ from \cite{smooth} so that $i < j$ corresponds to $U_i \subset U_j$.

(2) We note that $(A_i, \phi_{ij})$ and $(M_i, \psi_{ij})$ are presheaves instead of sheaves.
\end{Rem}

%%%%%%%%%%%%%%%%%%%%%%%%%%%%%%%%%%
%%%%%%%%%%%%%%%%%%%%%%%%%%%%%%%%%%
%%%%%%%%%%%%%%%%%%%%%%%%%%%%%%%%%%
\section{NC deformation}

We recall the definition of NC deformations.

\begin{Defn}
Let $X^0$ be an algebraic variety over a field $k$ and let $(R,\frak m)$ be an Artin local $k$-algebra
with $R/\frak m = k$.
Let $\{U_i = \text{Spec}(A_i^0)\}_{i \in I}$ be a covering of $X^0$ by affine open subsets, and let 
$\phi_{ij}^0: A_j^0 \to A_i^0$ be the restriction homomorphisms for $U_i \subset U_j$.
An {\em NC deformation} of $X^0$ over $R$ with respect to the covering $\{U_i\}$ 
is a pair $(A, \alpha)$ consisting of an NC scheme $A = (A_i, \phi_{ij})$ and 
a set of isomorphisms $\alpha = (\alpha_i)$ which satisfy the following conditions:

(1) The $A_i$ are flat $R$-algebras and the $\phi_{ij}$ are $R$-homomorphisms.

(2) $\alpha_i: R/\frak m \otimes_R A_i \to A_i^0$ are isomorphisms such that 
$\phi_{ij}^0 \circ \alpha_j= \alpha_i \circ (R/\frak m \otimes_R \phi_{ij}) $ for $U_i \subset U_j$.
\end{Defn}

\begin{Lem}
Let $(A,\alpha)$ be an NC deformation of an algebraic variety $X^0$, and
let $i,j,k,l \in I$ satisfying $i = j \cap k$ and $j,k < l$.
Then the natural right $A_l$-homomorphism $A_j \otimes_{A_l} A_k \to A_i$ given by 
$a_j \otimes a_k \mapsto \phi_{ij}(a_j)\phi_{ik}(a_k)$ for $a_j \in A_j$ and $a_k \in A_k$ is bijective.
\end{Lem}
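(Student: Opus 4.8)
The map in the statement is the non-commutative, deformed analogue of the scheme-theoretic identity $\mathcal O(U_j\cap U_k)\cong\mathcal O(U_j)\otimes_{\mathcal O(U_l)}\mathcal O(U_k)$ for affine opens $U_j,U_k\subseteq U_l$ whose intersection $U_i=U_j\cap U_k$ is again affine. Accordingly, the plan is to reduce the assertion modulo $\frak m$ to the commutative special fibre $X^0$, to verify it there by a standard fibre-product computation, and then to lift the resulting isomorphism to $A$ by a flatness argument exploiting the nilpotence of $\frak m$. Denote by $\theta: M:=A_j\otimes_{A_l}A_k\to A_i$ the homomorphism under consideration.

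First I would check that the source $M$ is flat over $R$. Writing the functor $M\otimes_R(-)$ as the composite $A_j\otimes_{A_l}(A_k\otimes_R(-))$, it is a composition of $A_k\otimes_R(-)$, exact because $A_k$ is $R$-flat, with $A_j\otimes_{A_l}(-)$ on left $A_l$-modules, exact because $\phi_{jl}$ is flat; hence $M$ is $R$-flat, as is the target $A_i$. Next I would compute the reduction $M\otimes_R k$. Using $A_\bullet\otimes_R k\cong A_\bullet^0$ together with right-exactness of the tensor products, one obtains $M\otimes_R k\cong A_j^0\otimes_{A_l^0}A_k^0$, and the compatibility condition (2) in the definition of an NC deformation identifies the reduced map $\bar\theta$ with the natural multiplication map $A_j^0\otimes_{A_l^0}A_k^0\to A_i^0$, $\bar a_j\otimes\bar a_k\mapsto\phi_{ij}^0(\bar a_j)\phi_{ik}^0(\bar a_k)$. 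The one point demanding care here is to track the left and right module structures over the non-commutative algebra $A_l$, so that all these identifications are canonical.

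On the special fibre $\bar\theta$ is an isomorphism: since $U_j\hookrightarrow U_l\hookleftarrow U_k$ are open immersions, their fibre product is the intersection, $U_j\times_{U_l}U_k=U_j\cap U_k=U_i$, and as all four opens are affine this reads $\text{Spec}(A_j^0\otimes_{A_l^0}A_k^0)=\text{Spec}(A_i^0)$, so that $\bar\theta$ is bijective. This is the only geometric input, and it is classical. Finally I would lift. The homomorphism $\theta:M\to A_i$ is a map of $R$-flat modules whose reduction $\bar\theta$ is an isomorphism, and $\frak m$ is nilpotent, say $\frak m^N=0$, because $R$ is Artin local. For surjectivity, right-exactness gives $\text{coker}(\theta)\otimes_R k\cong\text{coker}(\bar\theta)=0$, whence $\text{coker}(\theta)=\frak m\,\text{coker}(\theta)=\cdots=\frak m^N\text{coker}(\theta)=0$. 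With $\theta$ surjective, write $0\to K\to M\to A_i\to 0$; flatness of $A_i$ gives $\text{Tor}_1^R(A_i,k)=0$, so the long exact sequence yields $K\otimes_R k\cong\ker(\bar\theta)=0$, and again $K=0$ by nilpotence. Thus $\theta$ is bijective.

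I expect the genuine obstacle to be not any single step but the bookkeeping of the second paragraph: ensuring that reduction modulo $\frak m$ commutes with the tensor product over the \emph{non-commutative} algebra $A_l$ and delivers precisely the natural multiplication map, so that the classical fibre-product fact of the third paragraph can be invoked verbatim. Once this identification is secured, the flatness-lifting argument is formal and relies only on the Artinian hypothesis on $R$.
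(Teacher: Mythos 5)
Your proof is correct, but it takes a genuinely different route from the paper's. The paper argues by induction on the length of $R$: for a small extension $0 \to J \to R' \to R \to 0$ it tensors the sequence $0 \to J \otimes A_j^0 \to A'_j \to A_j \to 0$ with $- \otimes_{A'_l} A'_k$ (exact by flatness of $\phi'_{kl}$), compares the resulting row with $0 \to J \otimes A_i^0 \to A'_i \to A_i \to 0$, and concludes by the five lemma, the base case $R = k$ being the commutative statement (which the paper justifies via the common function field $k(X^0)$). You collapse this induction into a single step: reduce $\theta$ modulo $\frak m$, identify $\bar\theta$ with the commutative multiplication map, invoke the affine fibre-product identity $U_j \times_{U_l} U_k = U_j \cap U_k = U_i$, and then lift bijectivity using right-exactness of $- \otimes_R k$, the vanishing $\mathrm{Tor}_1^R(A_i,k) = 0$, and nilpotence of $\frak m$. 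Your argument is more economical in its inputs: it needs only the $R$-flatness of the target $A_i$ (part of the definition of an NC deformation) together with nilpotence of $\frak m$, and never actually uses flatness of the gluing homomorphisms. Indeed, your first paragraph (flatness of $M$ over $R$) is never invoked in your lifting step and can simply be deleted --- which is just as well, since as written it requires exactness of $A_j \otimes_{A_l}(-)$, i.e.\ flatness of $A_j$ as a \emph{right} $A_l$-module, whereas the paper only ever uses flatness on the other side (exactness of $- \otimes_{A_l} A_k$); if you want to keep that paragraph, decompose instead as $(A_j \otimes_R -) \otimes_{A_l} A_k$. What the paper's induction buys is uniformity: the same small-extension mechanism reappears later in the paper (for extending locally free modules and for proving the endomorphism algebra noetherian). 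A minor point in your favour: your fibre-product justification of the base case makes explicit the tacit hypothesis that the poset meet $i = j \cap k$ corresponds to the set-theoretic intersection $U_i = U_j \cap U_k$, which the paper's appeal to a common function field leaves implicit.
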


\begin{proof}
We proceed by induction on the length of $R$.
If $R = k$, then the condition is a consequence of the fact that the $A_i^0$ have a common field of fractions $k(X^0)$.   
Let $0 \to J \to R' \to R \to 0$ be a small extension of Artin local algebras, i.e., $J \subset R'$ is an ideal such that
$\frak m' J = 0$, and let $A' = (A'_i, \phi_{ij}')$ be an NC deformation of $X^0$ over $R'$ such that 
$A \cong R \otimes_{R'} A'$.
Assuming that $A$ satisfies the conclusion of the lemma, we will prove the same statement holds for $A'$.
We have the following commutative diagram
\[
\begin{CD}
0 @>>> J \otimes A_j^0 \otimes_{A_l^0} A_k^0 @>>> A'_j \otimes_{A'_l} A'_k @>>> 
A_j \otimes_{A_l} A_k @>>> 0 \\
@. @VVV @VVV @VVV \\
0 @>>> J \otimes A_i^0 @>>> A'_i @>>> A_i @>>> 0,
\end{CD}
\]
where the first line is derived from an exact sequence
\[
0 \to J \otimes A_j^0 \to A'_j \to A_j \to 0 
\]
by tensoring $\otimes_{A'_l} A'_k$ with the isomorphisms
$A_j \otimes_{A'_l} A'_k \cong A_j \otimes_{A_l} A_k$ and 
$A_j^0 \otimes_{A'_l} A'_k \cong A_j^0 \otimes_{A_l^0} A^0_k$. 
It is exact thanks to the flatness of $\phi'_{kl}$.
Since both left and right vertical arrows are bijective, so is the middle one.
\end{proof}

%%%%%%%%%%%%%%%%%%%%%%%%%%%%%%%%%%
%%%%%%%%%%%%%%%%%%%%%%%%%%%%%%%%%%
%%%%%%%%%%%%%%%%%%%%%%%%%%%%%%%%%%
\section{\v Cech cohomology}

For an NC scheme $A = (A, \phi)$, 
we will assume the following additional conditions from now on:

\begin{itemize}
\item The index set $I$ is finite.

\item The natural right $A_l$-homomorphism $A_j \otimes_{A_l} A_k \to A_i$ given by 
$a_j \otimes a_k \mapsto \phi_{ij}(a_j)\phi_{ik}(a_k)$ is bijective 
for $i,j,k,l$ with $i = j \cap k$, $j,k < l$, $a_j \in A_j$ and $a_k \in A_k$.
\end{itemize}

For $s \in I$ and a right $A_s$-module $M_s$, 
we define $[M_s] \in \text{Qcoh}(A, \phi)$ in the following way.
We define 
\[
[M_s]_i = M_s \otimes_{A_s} A_p
\]
for $p = i \cap s$.
We note that $[M_s]_i$ has a right $A_i$-module structure because $A_p$ has a $A_i$-module structure
given by $\phi_{pi}$.

For $i < j$, let $q = j \cap s$. 
Then we will define a gluing isomorphism $\psi_{ij}^{[M_s]}: [M_s]_j \otimes_{A_j} A_i \to [M_s]_i$, i.e., 
\[
\psi_{ij}^{[M_s]}: (M_s \otimes_{A_s} A_q) \otimes_{A_j} A_i \to M_s \otimes_{A_s} A_p
\]
by 
\[
m_s \otimes a_q \otimes a_i \mapsto m_s \otimes \phi_{pq}(a_q) \phi_{pi}(a_i)
\]
for $m_s \in M_s$, $a_q \in A_q$ and $a_i \in A_i$.

\begin{Lem}
(1) $\psi_{ij}^{[M_s]}$ is a well-defined isomorphism of right $A_i$-modules.

(2) $[M_s] \in \text{Qcoh}(A, \phi)$.

(3) There is a natural homomorphism $r^M_s: M \to [M_s]$ in $\text{Qcoh}(A, \phi)$.

(4) The functor $[\,\,\,]: \text{Mod-}A_s \to \text{Qcoh}(A, \phi)$ is exact.

(5) $[M_{i \cap j}] = [[M_i]_j]$ and $r_j^{[M_i]} r_i^M = r_{i \cap j}^M$.
\end{Lem}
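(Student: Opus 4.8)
The plan is to prove the five assertions in order, isolating the single genuinely non-formal input — the triple-intersection hypothesis imposed at the start of this section — in the bijectivity of part (1), and reducing everything else to the cocycle identities $\phi_{ab}\phi_{bc}=\phi_{ac}$ (for $a\le b\le c$) together with the flatness of the $\phi_{ij}$. Throughout I write $p=i\cap s$, $q=j\cap s$; since $i\le j$ we have $p\le q$, so that $\phi_{pq}$ is defined. For part (1) I would first use the associativity identification
\[
(M_s\otimes_{A_s}A_q)\otimes_{A_j}A_i \;=\; M_s\otimes_{A_s}(A_q\otimes_{A_j}A_i),
\]
so that $\psi^{[M_s]}_{ij}$ is obtained by applying $M_s\otimes_{A_s}(-)$ to the map
\[
\theta:\,A_q\otimes_{A_j}A_i\to A_p,\qquad a_q\otimes a_i\mapsto \phi_{pq}(a_q)\phi_{pi}(a_i).
\]
Well-definedness of $\theta$ (and hence of $\psi^{[M_s]}_{ij}$), its $(A_s,A_i)$-bilinearity, and its compatibility with both tensor relations are immediate from the cocycle identities $\phi_{pq}\phi_{qs}=\phi_{ps}$, $\phi_{pq}\phi_{qj}=\phi_{pj}$ and $\phi_{pi}\phi_{ij}=\phi_{pj}$. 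Tensoring the resulting left $A_s$-linear map with $M_s$ then recovers $\psi^{[M_s]}_{ij}$ as a right $A_i$-module homomorphism, so it remains only to see that $\theta$ is an isomorphism.

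The bijectivity of $\theta$ I would establish by a case split on $q$. If $q=j$ (equivalently $s\ge j$, forcing $p=i$), then $\theta$ is the canonical isomorphism $A_j\otimes_{A_j}A_i\cong A_i$. If $q<j$, then necessarily $q=s<j$; since also $i<j$ and $p=i\cap s=s\cap i$, the map $\theta:A_s\otimes_{A_j}A_i\to A_p$ coincides with the bijection furnished by the triple-intersection hypothesis applied to the indices $s,i<j$ with $p=s\cap i$. This case is exactly where the extra assumption on $A$ is consumed; it is the one step of the lemma that is not pure bookkeeping.

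For part (2) the two axioms of a quasi-coherent module are checked from the explicit formula: the compatibility with the $A_i$-action is built into the construction of $\theta$, while the cocycle relation $\psi^{[M_s]}_{ij}\bigl(\psi^{[M_s]}_{jk}\otimes_{A_j}A_i\bigr)=\psi^{[M_s]}_{ik}$ for $i<j<k$ follows, after the identification $A_r\otimes_{A_k}A_j\otimes_{A_j}A_i=A_r\otimes_{A_k}A_i$ with $r=k\cap s$, from the chain $\phi_{pq}\phi_{qr}=\phi_{pr}$, $\phi_{pq}\phi_{qj}=\phi_{pj}$, $\phi_{pi}\phi_{ij}=\phi_{pj}$; both composites carry $m\otimes a_r\otimes a_i$ to $m\otimes \phi_{pr}(a_r)\phi_{pi}(a_i)$. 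For part (3) I would define $(r^M_s)_i:M_i\to[M_s]_i=M_s\otimes_{A_s}A_p$ as the composite of the canonical restriction $M_i\to M_p$, $m\mapsto\psi^M_{pi}(m\otimes 1)$, with the inverse gluing isomorphism $(\psi^M_{ps})^{-1}:M_p\stackrel{\cong}{\longrightarrow}M_s\otimes_{A_s}A_p$, and verify compatibility with the gluing maps using the cocycle relations for $\psi^M$. Part (4) is then immediate: exactness in $\text{Qcoh}(A)$ is tested componentwise, and $[\,\cdot\,]_i=(-)\otimes_{A_s}A_p$ is exact because $\phi_{ps}:A_s\to A_p$ is flat.

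Finally, for part (5) the identification $[[M_i]_j]=[M_{i\cap j}]$ results from applying the exact functor $[\,\cdot\,]$ to the gluing isomorphism $\psi^M_{i\cap j,\,i}:[M_i]_j=M_i\otimes_{A_i}A_{i\cap j}\stackrel{\cong}{\longrightarrow}M_{i\cap j}$, the associativity of the tensor product identifying the respective components; the composition law $r^{[M_i]}_j\,r^M_i=r^M_{i\cap j}$ then expresses the transitivity of the restriction maps of part (3) and is verified componentwise by the same cocycle manipulations. I expect the only real obstacle to be the bijectivity in part (1): it is where the triple-intersection hypothesis is genuinely used, and it requires the case analysis above to match $\theta$ either to a trivial identification or to that hypothesis. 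Everything else, though lengthy, is routine verification with the cocycle identities and flatness and presents no conceptual difficulty.
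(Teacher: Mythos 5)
Your overall route is the paper's own: well-definedness of $\psi^{[M_s]}_{ij}$ from the cocycle identities $\phi_{pq}\phi_{qs}=\phi_{ps}$, $\phi_{pq}\phi_{qj}=\phi_{pj}=\phi_{pi}\phi_{ij}$; bijectivity from associativity of the tensor product plus the triple-intersection hypothesis; the same explicit formula $m_i\mapsto(\psi^M_{ps})^{-1}\psi^M_{pi}(m_i\otimes 1)$ for $r^M_s$; componentwise exactness from flatness of $\phi_{ps}$; and a brief treatment of (5). Factoring the gluing map through $\theta\colon A_q\otimes_{A_j}A_i\to A_p$ is only a cosmetic repackaging of the paper's displayed chain of isomorphisms.

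However, your case analysis for the bijectivity of $\theta$ contains a false step: you claim that $q<j$ forces $q=s$. This fails whenever $s$ and $j$ are incomparable in the poset, since $i\cap j$ denotes the meet (the index with $U_{i\cap j}=U_i\cap U_j$), so $q=j\cap s$ can be a third element distinct from both $j$ and $s$. For example, cover $\mathbb{P}^1$ by the two standard affines $U_1,U_2$ together with $U_0=U_1\cap U_2$; taking $s=1$, $j=2$, $i=0$ gives $q=0<j$ but $q\neq s$. In that situation the map you invoke, $A_s\otimes_{A_j}A_i\to A_p$, is not the map $\theta$ whose bijectivity is needed, and the hypothesis cannot be applied ``to the indices $s,i<j$'' because $s<j$ is false. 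Fortunately the repair is immediate and removes the case split: the hypothesis applies directly to the indices $q,i<j$, since $q\cap i=(j\cap s)\cap i=(i\cap j)\cap s=i\cap s=p$ (using $i<j$), which is exactly how the paper uses it, namely $A_q\otimes_{A_j}A_i\cong A_p$ with no reference to whether $q=s$; the degenerate case $q=j$ reduces to the canonical isomorphism $A_j\otimes_{A_j}A_i\cong A_i$ as you note. With this one-line fix your proof is correct and coincides with the paper's.
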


\begin{proof}
(1) It is well defined because for $a_s \in A_s$ and $a_j \in A_j$  
\[
\begin{split}
&\psi_{ij}^{[M_s]}(m_sa_s \otimes a_q \otimes a_i)
= m_s a_s \otimes \phi_{pq}(a_q) \phi_{pi}(a_i) \\
&= m_s \otimes \phi_{ps}(a_s) \phi_{pq}(a_q) \phi_{pi}(a_i)
= \psi_{ij}^{[M_s]}(m_s \otimes \phi_{qs}(a_s) a_q \otimes a_i), \\
&\psi_{ij}^{[M_s]}(m_s \otimes a_q \phi_{qj}(a_j) \otimes a_i) 
= m_s \otimes \phi_{pq}(a_q) \phi_{pj}(a_j)\phi_{pi}(a_i) \\
&= m_s \otimes \phi_{pq}(a_q) \phi_{pi}(\phi_{ij}(a_j)a_i)
= \psi_{ij}^{[M_s]}(m_s \otimes a_q \otimes \phi_{ij}(a_j) a_i).
\end{split}
\]
The bijectivity is a consequence of the following:
\[
(M_s \otimes_{A_s} A_q) \otimes_{A_j} A_i
\cong M_s \otimes_{A_s} (A_q \otimes_{A_j} A_i) 
\cong M_s \otimes_{A_s} A_p. 
\]

(2) We check the cocycle condition for $\psi_{ij}^{[M_s]}$.
For $r= k \cap s$, we check
\[
\begin{split}
&\psi_{ij}^{[M_s]}\psi_{jk}^{[M_s]} = \psi_{ik}^{[M_s]}: 
((M_s \otimes_{A_s} A_r) \otimes_{A_k} A_j) \otimes_{A_j} A_i \\
&\to (M_s \otimes_{A_s} A_q) \otimes_{A_j} A_i
\to M_s \otimes_{A_s} A_p.
\end{split}
\]
Indeed
\[
\begin{split}
&m_s \otimes a_r \otimes a_j \otimes a_i \mapsto m_s \otimes \phi_{qr}(a_r) \phi_{qj}(a_j) \otimes a_i \\
&\mapsto m_s \otimes \phi_{pr}(a_r) \phi_{pj}(a_j) \phi_{pi}(a_i) = m_s \otimes \phi_{pr}(a_r) \phi_{pi}(\phi_{ij}(a_j)a_i)
\end{split}
\]
for $m_s \in M_s$, $a_r \in A_r$, $a_j \in A_j$ and $a_i \in A_i$.

(3) It is given by natural homomorphisms $M_i \to M_s \otimes_{A_s} A_p \cong M_p$ as right $A_i$-modules.
Indeed it is given by $m_i \mapsto \psi_{pi}(m_i \otimes 1)$.

(4) Let $0 \to L_s \to M_s \to N_s \to 0$ be an exact sequence of right $A_s$-modules.
Since $A_p$ is flat over $A_s$, we obtain an exact sequence 
$0 \to [L_s]_i \to [M_s]_i \to [N_s]_i \to 0$, hence $0 \to [L_s] \to [M_s] \to [N_s] \to 0$ is exact.

(5) is clear.
\end{proof}

\begin{Lem}\label{local}
For $M, N \in \text{Qcoh}(A, \phi)$, there are natural isomorphisms
\[
\text{Hom}_{A_s}(M_s,N_s) \cong \text{Hom}_A([M_s], [N_s]) \cong \text{Hom}_A(M, [N_s]).
\]
\end{Lem}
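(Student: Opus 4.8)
The plan is to read the two asserted isomorphisms as the two halves of a single adjunction, namely that the functor $[\,\,]\colon \text{Mod-}A_s \to \text{Qcoh}(A,\phi)$ is right adjoint to the evaluation functor $e_s\colon \text{Qcoh}(A,\phi)\to\text{Mod-}A_s$, $N\mapsto N_s$, with unit given by the maps $r^{M}_s$ of part (3) of the previous lemma and counit the canonical identification $e_s[N_s]=N_s$. Granting the adjunction, the outer isomorphism $\text{Hom}_{A_s}(M_s,N_s)\cong\text{Hom}_A(M,[N_s])$ is precisely the adjunction bijection, and the middle term is recovered by specializing $M$ to $[M_s]$: since $e_s[M_s]=M_s$, the adjunction gives $\text{Hom}_A([M_s],[N_s])\cong\text{Hom}_{A_s}(M_s,N_s)$, and the comparison map $\text{Hom}_A([M_s],[N_s])\to\text{Hom}_A(M,[N_s])$ is precomposition with $r^M_s$ exactly because $r^M_s$ is the unit.

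Concretely I would first record the book-keeping identities that make the counit an equality on the nose: $e_s[N_s]=N_s\otimes_{A_s}A_{s\cap s}=N_s$, and $e_s[g]=g$ for a module homomorphism $g$, since $[g]_i=g\otimes\text{id}_{A_{i\cap s}}$ specializes at $i=s$ to $g$. Thus $e_s\circ[\,\,]=\text{id}_{\text{Mod-}A_s}$. Next I would verify the two genuine points. The first is that $r_s$ is a natural transformation $\text{id}_{\text{Qcoh}}\Rightarrow[e_s(-)]$: for $f\colon M\to N$ one checks componentwise that $[f_s]\circ r^M_s=r^N_s\circ f$, and with $p=i\cap s$ this reduces, after using $f_p\circ\psi^M_{ps}=\psi^N_{ps}\circ(f_s\otimes\text{id})$, to the compatibility $\psi^N_{pi}\circ(f_i\otimes\text{id})=f_p\circ\psi^M_{pi}$, which is just the statement that $f$ is a morphism in $\text{Qcoh}$. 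The second is the normalization $r^{[N_s]}_s=\text{id}_{[N_s]}$ for every $A_s$-module $N_s$: on the component $[N_s]_i=N_s\otimes_{A_s}A_p$ the map is $x\mapsto\psi^{[N_s]}_{pi}(x\otimes 1)$, and feeding $x=n_s\otimes a$ into the explicit formula for $\psi^{[N_s]}$ (all relevant meets collapsing to $p\cap s=p$, so the occurring $\phi_{pp}$ are identities) returns $n_s\otimes a=x$.

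With these three inputs the isomorphisms assemble formally. For the first, set $\alpha(g)=[g]$ and $\beta(H)=H_s$; then $\beta\alpha=\text{id}$ by the counit identities, while $\alpha\beta(H)=[H_s]=[H_s]\circ r^{[M_s]}_s=r^{[N_s]}_s\circ H=H$ by naturality of $r_s$ together with $r^{[M_s]}_s=r^{[N_s]}_s=\text{id}$. For the second, set $\gamma(H)=H\circ r^M_s$ and $\delta(h)=[h_s]$; then $\gamma\delta(h)=[h_s]\circ r^M_s=r^{[N_s]}_s\circ h=h$, again by naturality, and $\delta\gamma(H)=[(H\circ r^M_s)_s]=[H_s\circ(r^M_s)_s]=[H_s]=H$ using $(r^M_s)_s=\text{id}$ and the first isomorphism. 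Naturality of the whole chain in $M$ and $N$ is then automatic from the naturality of $r_s$ and the functoriality of $[\,\,]$ and $e_s$.

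The main obstacle is bookkeeping rather than conceptual: both the naturality of $r_s$ and the normalization $r^{[N_s]}_s=\text{id}$ require unwinding the explicit gluing maps $\psi^M$ and $\psi^{[N_s]}$ and tracking which $\phi$'s degenerate to identities once the constraints $p=i\cap s\le s$ are imposed. This is exactly where the well-definedness of $[\,\,]$ and the flatness invoked in part (4) of the previous lemma are used silently, and one must keep the three meets $i\cap s$, $p\cap s=p$ and $s\cap s=s$ straight; but no input beyond the definitions already in place is needed.
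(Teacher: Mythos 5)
Your proposal is correct and is essentially the paper's own proof: the paper exhibits exactly the same three maps ($g \mapsto [g]$, evaluation $H \mapsto H_s$, and precomposition $H \mapsto H \circ r^M_s$) and leaves implicit the check that they are mutually inverse, which is precisely what your naturality of $r_s$ and the normalization $r^{[N_s]}_s = \mathrm{id}$ supply. The adjunction $e_s \dashv [\,\,\,]$ is just a clean packaging of the same content.
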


\begin{proof}
(1) We prove the first isomorphism.
For an $A_s$-homomorphism $h_s: M_s \to N_s$ and for any $i \in I$,  we assign
\[
[h_s]_i = h_s \otimes_{A_s} A_p: M_s \otimes_{A_s} A_p \to N_s \otimes_{A_s} A_p  
\]
for $p = i \cap s$.
Conversely, $h: [M_s] \to [N_s]$ gives $h_s: M_s \to N_s$ because $[M_s]_s = M_s$ and $[N_s]_s = N_s$.

(2) The second isomorphism is given by combining with $r^M_s: M \to [M_s]$.
Conversely, $h: M \to [N_s]$ gives $h_s: M_s \to [N_s]_s = N_s$.
\end{proof}

\begin{Cor}
(1) If $M_s$ is an injective $A_s$-module, then $[M_s]$ is an injective module in $\text{Qcoh}(A,\phi)$.

(2) Let $M \in \text{Qcoh}(A,\phi)$ and $N_s \in \text{Mod-}A_s$.
If $M$ is locally free, then 
\[
\text{Ext}_A^p(M, [N_s]) \cong 0
\]
for $p > 0$.
\end{Cor}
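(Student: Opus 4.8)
The plan is to deduce both statements from Lemma \ref{local}, which identifies $\text{Hom}_A(P,[N_s])$ with $\text{Hom}_{A_s}(P_s,N_s)$ via passage to the $s$-component, together with the exactness of the functor $[\,\,]$. For part (1), I would verify the lifting property of an injective object directly. Let $L \to N$ be a monomorphism in $\text{Qcoh}(A,\phi)$; since kernels in $\text{Qcoh}(A,\phi)$ are computed componentwise, the induced map $L_s \to N_s$ is a monomorphism of right $A_s$-modules. Applying the natural isomorphism of Lemma \ref{local} to $P = N$ and $P = L$ identifies the restriction map $\text{Hom}_A(N,[M_s]) \to \text{Hom}_A(L,[M_s])$ with $\text{Hom}_{A_s}(N_s,M_s) \to \text{Hom}_{A_s}(L_s,M_s)$. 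Since $M_s$ is injective over $A_s$, the latter is surjective, hence so is the former, which is precisely the injectivity of $[M_s]$.

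For part (2), I would first manufacture an injective resolution of $[N_s]$ from one of $N_s$. Choosing an injective resolution $0 \to N_s \to I^0 \to I^1 \to \cdots$ of right $A_s$-modules and applying the exact functor $[\,\,]$ yields an exact complex $0 \to [N_s] \to [I^0] \to [I^1] \to \cdots$ in $\text{Qcoh}(A,\phi)$ in which, by part (1), every $[I^p]$ is injective; this is therefore an injective resolution of $[N_s]$. Computing $\text{Ext}$ through it gives $\text{Ext}^p_A(M,[N_s]) = H^p(\text{Hom}_A(M,[I^\bullet]))$, and Lemma \ref{local} furnishes an isomorphism of complexes $\text{Hom}_A(M,[I^\bullet]) \cong \text{Hom}_{A_s}(M_s,I^\bullet)$, whose cohomology is $\text{Ext}^p_{A_s}(M_s,N_s)$. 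When $M$ is locally free the component $M_s$ is free, hence projective, over $A_s$, so these groups vanish for $p > 0$.

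The argument is essentially formal once Lemma \ref{local} is available; the only point needing attention is the \emph{naturality} of the isomorphism $\text{Hom}_A(P,[N_s]) \cong \text{Hom}_{A_s}(P_s,N_s)$ in the variable $P$, as this is what lets me transport a monomorphism in part (1) and an entire resolution in part (2). Because the isomorphism is realized by taking the $s$-component of a morphism, with inverse given by composing with $r^M_s$, this naturality is immediate, and I expect no substantive obstacle.
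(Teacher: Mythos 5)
Your proof of part (1) is exactly the paper's argument: monomorphisms in $\text{Qcoh}(A,\phi)$ are componentwise, so Lemma \ref{local} transports the lifting property of the injective $A_s$-module $M_s$ to $[M_s]$. Part (2), however, takes a genuinely different route. The paper dimension-shifts: it embeds $N_s$ into a single injective module $N'_s$, applies the exact functor $[\,\,\,]$ to $0 \to N_s \to N'_s \to N''_s \to 0$, notes that $\text{Hom}_A(M,-)$ stays exact on this sequence because $M_s$ is free, and then inducts --- $\text{Ext}_A^1(M,[N_s]) = 0$ for arbitrary $N_s$ forces $\text{Ext}_A^1(M,[N''_s]) = 0$, hence $\text{Ext}_A^2(M,[N_s]) = 0$, and so on. You instead push an entire injective resolution $I^{\bullet}$ of $N_s$ through $[\,\,\,]$, use part (1) to recognize $[I^{\bullet}]$ as an injective resolution of $[N_s]$ in $\text{Qcoh}(A,\phi)$, and conclude $\text{Ext}_A^p(M,[N_s]) \cong \text{Ext}_{A_s}^p(M_s,N_s)$, which vanishes because $M_s$ is free, hence projective. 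Your route is clean and in fact proves more: the isomorphism $\text{Ext}_A^p(M,[N_s]) \cong \text{Ext}_{A_s}^p(M_s,N_s)$ holds for arbitrary $M \in \text{Qcoh}(A,\phi)$, with local freeness entering only in the final vanishing step, and it avoids the finite-rank hypothesis the paper invokes for Hom-exactness (which, incidentally, is also not essential in the paper's argument, since projectivity suffices). Both proofs rest on the same pillars: exactness of $[\,\,\,]$, part (1), and Lemma \ref{local}. One small correction: for part (2) the naturality you need is in the second variable $N_s$, so that the isomorphisms $\text{Hom}_A(M,[I^p]) \cong \text{Hom}_{A_s}(M_s,I^p)$ commute with the differentials of the resolution, not naturality in $P$; it is equally immediate from the explicit description of the isomorphism (take $s$-components, noting $[f]_s = f$), so nothing breaks.
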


\begin{proof}
(1) If $N \to N'$ is an injective homomorphism in $\text{Qcoh}(A,\phi)$, then 
$N_s \to N'_s$ is an injective right $A_s$-homomorphism.
Since $M_s$ is an injective $A_s$-module, 
$\text{Hom}_{A_s}(N'_s, M_s) \to \text{Hom}_{A_s}(N_s, M_s)$ is surjective, hence
$\text{Hom}_A(N', [M_s]) \to \text{Hom}_A(N, [M_s])$ is also surjective.
Therefore $[M_s]$ is an injective module.

(2) We embed $N_s$ to an injective right $A_s$-module $N'_s$, and construct an 
exact sequence $0 \to N_s \to N'_s \to N''_s \to 0$.
Since $M_s$ is a free module of finite rank, we have an exact sequence
$0 \to \text{Hom}_{A_s}(M_s, N_s) \to \text{Hom}_{A_s}(M_s, N'_s) \to \text{Hom}_{A_s}(M_s, N''_s) \to 0$.
Then we have an exact sequence
\[
0 \to \text{Hom}_A(M, [N_s]) \to \text{Hom}_A(M, [N'_s]) \to \text{Hom}_A(M, [N''_s]) \to 0.
\]
Since $[N'_s]$ is injective, we have $\text{Ext}_A^p(M, [N'_s]) = 0$ for $p > 0$.
Therefore we obtain $\text{Ext}_A^1(M, [N_s]) = 0$.
Since $N_s$ was arbitrary, we have also $\text{Ext}_A^1(M, [N''_s]) = 0$.
It follows that $\text{Ext}_A^2(M, [N_s]) = 0$.
Repeating this argument, we obtain our assertion.
\end{proof}

Let $d = \# I$.
We assign numbers to elements in $I$ such that 
$I = \{i(1),\dots,i(d)\}$ in order to express the result below.
We note that these numbers are irrelevant to the order of $I$, i.e., 
$j_1 < j_2$ does not imply $i(j_1) < i(j_2)$.

\begin{Thm}
Let $M$ be a quasi-coherent module on an NC scheme $A = (A,\phi)$.
Then there is an exact sequence in $\text{Qcoh}(A, \phi)$:
\[
\begin{split}
&0 \to M \to \bigoplus_{1 \le j_0 \le d} [M_{i(j_0)}] \to \bigoplus_{1 \le j_0 < j_1 \le d} [M_{i(j_0) \cap i(j_1)}] \\
&\to \bigoplus_{1 \le j_0 < j_1 < j_2 \le d} [M_{i(j_0) \cap i(j_1) \cap i(j_2)}] \to \dots 
\end{split}
\]
where the arrows are given by 
\[
d_M^{\,p} = \bigoplus_{1 \le j_0 < \dots < j_p \le d} \,\, \sum_{k = 0}^p (-1)^k
r_{i(j_k)}^{[M_{\bigcap_{l \ne k} i(j_l)}]}.
\]
\end{Thm}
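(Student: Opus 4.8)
The plan is to check exactness one component at a time. Since $\mathrm{Qcoh}(A,\phi)$ is abelian with kernels and cokernels formed componentwise, it suffices to fix $s \in I$, apply the component functor $(-)_s$ to the whole sequence, and show that the resulting complex of right $A_s$-modules is exact. As $I = \{i(1),\dots,i(d)\}$ exhausts the index set, I may write $s = i(m)$ for a unique $m$.

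For a strictly increasing multi-index $J = (j_0 < \dots < j_p)$ set $i_J = i(j_0) \cap \dots \cap i(j_p)$. First I would identify the $s$-component of each summand: by definition $[M_{i_J}]_s = M_{i_J} \otimes_{A_{i_J}} A_{i_J \cap s}$, and the gluing isomorphism $\psi_{i_J \cap s,\, i_J}$ of $M$ identifies this canonically with $M_{i_J \cap s}$ as a right $A_s$-module. Under these identifications I would verify, using the defining formula of the maps $r^M_s$ together with the compatibility $r^{[M_i]}_j r^M_i = r^M_{i\cap j}$ established above, that the differential $d_M^{\,p}$ becomes the usual alternating sum of restriction homomorphisms $M_{i_{J'}\cap s} \to M_{i_J \cap s}$ attached to deleting one index from $J$. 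In other words, the localized complex is exactly the augmented ordered \v Cech complex of the family $\{M_{i(j)\cap s}\}_{1 \le j \le d}$.

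The essential observation is that this localized family contains $M_s$ itself, since $i(m)\cap s = s$; and in the augmentation $M_s \to \bigoplus_j M_{i(j)\cap s}$ the slot $j=m$ is the identity. I would then exhibit the standard contracting homotopy $h^p$ which inserts the index $m$: on the summand indexed by $J' = (j_0 < \dots < j_{p-1})$ one sets $(h^p\sigma)_{J'} = 0$ if $m \in J'$, and otherwise $(h^p\sigma)_{J'} = (-1)^e \sigma_{J''}$, where $J''$ is the reordering of $(m, j_0,\dots,j_{p-1})$ and $e$ is the number of transpositions needed. Here no genuine restriction intervenes, because $i_{J''}\cap s = i_{J'}\cap s$, so the two modules coincide. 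The homotopy identity $d^{\,p-1}h^p + h^{p+1}d^{\,p} = \mathrm{id}$ is then a routine sign computation identical to the commutative case, and it yields exactness of the whole augmented complex at once.

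The step I expect to be the real obstacle is the second one: showing that after the $\psi$-identifications the map $d_M^{\,p}$ is honestly the signed sum of well-defined right $A_s$-homomorphisms $M_{i_{J'}\cap s}\to M_{i_J\cap s}$, and that the insertion map $h^p$ is likewise well defined. This is precisely where the standing hypothesis $A_j \otimes_{A_l} A_k \stackrel{\cong}{\to} A_i$ for $i = j\cap k$, $j,k < l$, is needed: it guarantees that the modules attached to multiple intersections are computed consistently, independently of the order in which the intersections are taken, so that the cosimplicial identities among the $r$-maps hold and both $d$ and $h$ respect the gluing data. Granting this compatibility, exactness is a formal consequence of the contracting homotopy.
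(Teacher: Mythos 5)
Your proposal is correct and follows essentially the same route as the paper: both proofs check exactness componentwise (using that kernels and cokernels in $\text{Qcoh}(A,\phi)$ are formed componentwise), identify $[M_{i(j_0)\cap\dots\cap i(j_p)}]_{s}$ with $M_{i(j_0)\cap\dots\cap i(j_p)\cap s}$, and then exploit that the localized complex is the augmented alternating \v Cech complex of a family in which the slot $j = m$ (resp.\ $j_0 = r$ in the paper) is $M_s$ itself. Your explicit contracting homotopy inserting the index $m$ is exactly the rigorous form of the paper's pairwise ``cancellation'' of the $r$-containing terms against the $r$-free terms, so the two arguments coincide in substance.
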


For example, $d_M^{\,0} = \bigoplus_{1 \le j_0 \le d} r_{i(j_0)}^M$, 
$d_M^{\,1} = \bigoplus_{1 \le j_0 < j_1 \le d} (r_{i(j_0)}^{[M_{i(j_1)}]} - r_{i(j_1)}^{[M_{i(j_0)}]})$, 
$d_M^{\,2} = \bigoplus_{1 \le j_0 < j_1 < j_2 \le d} (r_{i(j_0)}^{[M_{i(j_1) \cap i(j_2)}]} 
- r_{i(j_1)}^{[M_{i(j_0) \cap i(j_2)}]} + r_{i(j_2)}^{[M_{i(j_0) \cap i(j_1)}]})$, \dots.

\begin{proof}
The sequence is certainly well-defined and becomes a complex. 
We will check the exactness locally.
Thus we will prove that the following sequence is exact for each fixed $i(r) \in I$:
\[
\begin{split}
&0 \to M_{i(r)} \to \bigoplus_{1 \le j_0 \le d} M_{i(j_0) \cap i(r)} 
\to \bigoplus_{1 \le j_0 < j_1 \le d} M_{i(j_0) \cap i(j_1) \cap i(r)} \to \dots \\
&\to \bigoplus_{1 \le j_0 < \dots < j_p \le d} M_{i(j_0) \cap \dots \cap i(j_p) \cap i(r)} \to \dots 
\end{split}
\]
We can rewrite it to
\[
\begin{split}
&0 \to M_{i(r)} \to M_{i(r)} \oplus \bigoplus_{j_0 \ne r} M_{i(j_0) \cap i(r)} \\
&\to \bigoplus_{r < j_1} M_{i(r) \cap i(j_1)} \oplus \bigoplus_{j_0 < r} M_{i(j_0) \cap i(r)} 
\oplus \bigoplus_{j_0 < j_1,\, j_0 \ne r,\, j_1 \ne r} M_{i(j_0) \cap i(j_1) \cap i(r)} \\
&\to \bigoplus_{r < j_1 < j_2} M_{i(r) \cap i(j_1) \cap i(j_2)} \oplus 
\bigoplus_{j_0 < r < j_2} M_{i(j_0) \cap i(r) \cap i(j_2)} \oplus 
\bigoplus_{j_0 < j_1 < r} M_{i(j_0) \cap i(j_1) \cap i(r)} \\
&\oplus \bigoplus_{j_0 < j_1 < j_2, \,j_0 \ne r,\, j_1 \ne r,\, j_2 \ne r} M_{i(j_0) \cap i(j_1) \cap i(j_2) \cap i(r)} 
\to \dots \\
&\to \bigoplus_{q = 0}^p \bigoplus_{j_0 < j_1 < \dots < j_p,\, j_q = r} M_{i(j_0) \cap \dots \cap i(j_p)} 
\oplus \bigoplus_{j_0 < j_1 < \dots < j_p, \,j_q \ne r, \,\forall q} M_{i(j_0) \cap \dots \cap i(j_p) \cap i(r)} 
\to \dots 
\end{split}
\]
The initial two terms of $M_{i(r)}$ cancel when we take the cohomology.
The terms $\bigoplus_{j_0 \ne r} M_{i(j_0) \cap i(r)}$ and 
$\bigoplus_{r < j_1} M_{i(r) \cap i(j_1)} \oplus \bigoplus_{j_0 < r} M_{i(j_0) \cap i(r)}$ cancel again, and so on.
Hence the sequence is exact.
\end{proof}

\begin{Defn}
We define a \v Cech complex of a quasi-coherent module $M$ by 
\[
\check C^{\,p}(M) = \bigoplus_{j_0 < \dots < j_p} [M_{i(j_0) \cap \dots \cap i(j_p)}], \,\,\,0 \le p < d,
\]
\[
d_M^{\,p}(x) = \bigoplus_{j_0 < \dots < j_p} \sum_{k = 0}^p (-1)^k
r_{i(j_k)}^{[M_{\bigcap_{l \ne k} i(j_l)}]}, \,\,\, 0 < p < d.
\]
\end{Defn}

\begin{Cor}\label{Cech}
There is a spectral sequence
\[
E_1^{p,q} = \text{Ext}^q_A(M, \check C^p(N)) \Rightarrow \text{Ext}_A^{p+q}(M,N)
\]
for $M,N \in \text{Qcoh}(A,\phi)$.
In particular, if $M$ is locally free of finite rank, then
\[
H^p(\text{Hom}_A(M, \check C^{\bullet}(N)) \cong \text{Ext}_A^p(M,N).
\]
\end{Cor}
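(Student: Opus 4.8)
The plan is to read the preceding Theorem as the assertion that the augmented \v Cech complex
\[
0 \to N \to \check C^{\,0}(N) \to \check C^{\,1}(N) \to \dots \to \check C^{\,d-1}(N) \to 0
\]
is exact, so that $N$ is quasi-isomorphic in the derived category $D(\text{Qcoh}(A,\phi))$ to the bounded complex $\check C^{\,\bullet}(N)$ placed in degrees $0,\dots,d-1$. Since $\text{Qcoh}(A,\phi)$ is a Grothendieck category it has enough injectives, hence $\text{RHom}_A(M,-)$ is defined and $\text{Ext}^n_A(M,N) = H^n(\text{RHom}_A(M,N))$. The quasi-isomorphism then gives $\text{RHom}_A(M,N) \cong \text{RHom}_A(M,\check C^{\,\bullet}(N))$, and the desired spectral sequence is precisely the standard hyper-$\text{Ext}$ spectral sequence attached to this complex.

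Concretely, I would first choose a Cartan--Eilenberg injective resolution $I^{\bullet,\bullet}$ of the bounded complex $\check C^{\,\bullet}(N)$ in $\text{Qcoh}(A,\phi)$, apply $\text{Hom}_A(M,-)$ to obtain a double complex, and form its total complex. Its cohomology is $\text{Ext}^{\bullet}_A(M,N)$, because the total complex of $I^{\bullet,\bullet}$ is an honest injective resolution of the complex $\check C^{\,\bullet}(N) \simeq N$. Filtering by the $p$-degree and taking cohomology first in the resolution direction computes, column by column, the derived functors of $\text{Hom}_A(M,-)$ at each $\check C^{\,p}(N)$, which yields
\[
E_1^{p,q} = \text{Ext}^q_A(M,\check C^{\,p}(N)) \Rightarrow \text{Ext}^{p+q}_A(M,N).
\]
Boundedness of the \v Cech complex ($0 \le p \le d-1$, since $I$ is finite) guarantees convergence, so no additional care is needed there.

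For the final assertion I would invoke the preceding Corollary. When $M$ is locally free of finite rank, part (2) of that Corollary gives $\text{Ext}^q_A(M,[N_s]) = 0$ for every $q > 0$ and every right $A_s$-module $N_s$. Since each term
\[
\check C^{\,p}(N) = \bigoplus_{j_0 < \dots < j_p} [N_{i(j_0) \cap \dots \cap i(j_p)}]
\]
is a finite direct sum of modules of the form $[N_s]$, and $\text{Ext}^q_A(M,-)$ is additive, we obtain $E_1^{p,q} = 0$ for all $q > 0$. Hence the spectral sequence collapses onto the row $q = 0$, and reading off $E_2^{p,0} = E_\infty^{p,0}$ gives
\[
\text{Ext}^p_A(M,N) \cong H^p\big(\text{Hom}_A(M,\check C^{\,\bullet}(N))\big),
\]
which is the claim. (Equivalently, the vanishing says $\check C^{\,\bullet}(N)$ is an $\text{Hom}_A(M,-)$-acyclic resolution of $N$.)

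The only genuine point to watch is the existence of the hyper-$\text{Ext}$ spectral sequence in this non-commutative, merely-presheaf setting; but this is purely formal once one knows $\text{Qcoh}(A,\phi)$ has enough injectives, which it does by being a Grothendieck category. I therefore expect no serious obstacle: the substantive inputs---the \v Cech resolution and the acyclicity of the $[N_s]$ against locally free $M$---have already been supplied by the Theorem and the earlier Corollary, and the remainder is a routine application of Cartan--Eilenberg hyperderived functors.
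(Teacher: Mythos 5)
Your proof is correct and follows exactly the route the paper intends: the paper states this Corollary without proof, as an immediate consequence of the \v Cech resolution theorem together with the earlier Corollary giving $\text{Ext}^q_A(M,[N_s]) = 0$ for locally free $M$, which is precisely the hyper-$\text{Ext}$ spectral sequence argument you spelled out. Your filling-in of the Cartan--Eilenberg details and the degeneration onto the $q=0$ row is the standard formal machinery, valid here since $\text{Qcoh}(A,\phi)$ is a Grothendieck category with enough injectives and the \v Cech complex is bounded.
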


%%%%%%%%%%%%%%%%%%%%%%%%%%%%%%%%%%
%%%%%%%%%%%%%%%%%%%%%%%%%%%%%%%%%%
%%%%%%%%%%%%%%%%%%%%%%%%%%%%%%%%%%
\section{extension of locally free modules and tilting bundles}

Let $X^0$ be an algebraic variety with an affine open covering $U_i = \text{Spec}(A^0_i)$.
We assume that the index set $I = \{i\}$ has the minimum for any two elements, i.e., 
for $i,j \in I$, the intersection $U_i \cap U_j$ is equal to $U_k$ for some $k \in I$.
Let $\phi^0_{ij}$ be the gluing homomorphisms for the $A^0_i$.

First we consider the extension problem of locally free modules along NC deformations:
 
\begin{Thm}\label{extend F}
Let $X^0$ as above and let $F^0$ be a locally free sheaf of rank $r$ on $X^0$ with the gluing isomorphisms $\psi^0_{ij}$.
Let $0 \to J \to R' \to R \to 0$ be a small extension of Artin local rings, i.e., $\mathfrak m' J = 0$, 
let $A' = (A'_i, \phi'_{ij})$ be an NC deformation of $A^0 = (A^0_i, \phi_{ji}^0)$ 
over an Artin local ring $(R', \frak m')$, 
let $A = (A_i, \phi_{ij})$ be an NC deformation over $(R, \frak m)$ 
induced from $A'$, i.e., $A = R \otimes_{R'} A'$,
and let $F = (F_i, \psi_{ij})$ be a locally free right $A$-module of rank $r$ such that
$F \otimes_A A^0 \cong F^0$.
Then the following hold:

(1) There is an obstruction class $\xi \in J \otimes H^2(X^0, \mathcal End(F^0))$ such that $\xi = 0$ holds if and only if 
there exists a locally free right $A'$-module $F' = (F'_i, \psi'_{ij})$ such that 
$F' \otimes_{A'} A \cong F$. 

(2) In the case $\xi = 0$, the set of all such extensions $F'$ up to isomorphisms 
which induce the identity on $F$ is a torsor over $J \otimes H^1(X^0, \mathcal End(F^0))$.
\end{Thm}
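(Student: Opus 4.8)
The plan is to reduce the problem to the deformation of the gluing data and to control it by the \v Cech complex of \S 4. Each component is unobstructed: since $F_i$ is free of rank $r$ over $A_i$ and $A'_i$ is $R'$-flat, I simply set $F'_i = (A'_i)^{\oplus r}$, so that $F'_i \otimes_{A'_i} A_i \cong F_i$; thus the entire obstruction lives in the gluing isomorphisms. The key simplification is $\frak m' J = 0$: an $A'_i$-endomorphism of $F'_i$ that is the identity modulo $J$ has the form $\text{id} + \eta_i$ with $\eta_i \in J \otimes_k \text{End}_{A^0_i}(F^0_i)$, and two such compose by $(\text{id}+\eta_i)(\text{id}+\eta'_i) = \text{id}+\eta_i+\eta'_i$ because $J^2 \subset \frak m' J = 0$. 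Hence every order-$J$ correction is $A^0$-linear and is recorded by a section of $\mathcal End(F^0)$ over the corresponding open set or intersection. I would record at the outset that the obstruction-theoretic complex that appears is the ordinary \v Cech complex of the coherent sheaf $J \otimes \mathcal End(F^0)$ on $X^0$ for the affine covering $\{U_i\}$, which computes $J \otimes H^p(X^0, \mathcal End(F^0))$.

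Next I would construct the obstruction cocycle. Since $F'_j = (A'_j)^{\oplus r}$, the source $F'_j \otimes_{A'_j} A'_i \cong (A'_i)^{\oplus r}$ is free of rank $r$, and I lift each gluing isomorphism $\psi_{ij}: F_j \otimes_{A_j} A_i \to F_i$ to an $A'_i$-homomorphism $\psi'_{ij}: F'_j \otimes_{A'_j} A'_i \to F'_i$; this is an isomorphism because it reduces to one modulo the nilpotent ideal $J$. Over each triple overlap the composite $(\psi'_{ik})^{-1}\,\psi'_{ij}\,(\psi'_{jk}\otimes_{A'_j} A'_i)$ is defined and reduces to the identity modulo $J$, so it equals $\text{id} + \theta_{ijk}$ with $\theta_{ijk}$ a section of $J \otimes \mathcal End(F^0)$ over the overlap. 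Here Lemma~3.2 and the birationality condition of \S 4 are essential, in order to reassociate the iterated tensor products and to identify the restrictions of $\psi'_{ij}$, $\psi'_{jk}$, $\psi'_{ik}$ over the triple overlap so that the composite makes sense. The main computation is to verify that $\theta = (\theta_{ijk})$ is a \v Cech $2$-cocycle and that replacing the lifts by $\psi'_{ij}(\text{id}+\eta_{ij})$ changes $\theta$ by the coboundary of $\eta = (\eta_{ij})$; both identities follow from expanding the cocycle relations for the $\phi'$ and $\psi'$ and then discarding all products of two $J$-terms using $\frak m' J = 0$. This produces a class $\xi = [\theta] \in J \otimes H^2(X^0, \mathcal End(F^0))$, and changing the free lifts $F'_i$ only alters the $\psi'_{ij}$ by isomorphisms, hence leaves $\xi$ unchanged.

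Parts (1) and (2) then follow formally. A compatible family $\{\psi'_{ij}\}$ satisfying the cocycle condition, equivalently an extension $F' = (F'_i, \psi'_{ij})$ with $F' \otimes_{A'} A \cong F$, exists if and only if $\theta$ is a coboundary, that is $\xi = 0$; this gives (1). For (2), assume $\xi = 0$ and fix one extension $F'$ with $F'_i = (A'_i)^{\oplus r}$. Any other extension has gluing $\psi'_{ij}(\text{id}+\eta_{ij})$, where the cocycle condition forces $\eta = (\eta_{ij})$ to be a \v Cech $1$-cocycle, while an isomorphism $F' \to F''$ inducing the identity on $F$ is a family $\text{id}+\eta_i$ of automorphisms of the $F'_i$, i.e. a $0$-cochain, and it shifts $\eta$ by the coboundary of $(\eta_i)$. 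Hence the set of extensions up to such isomorphism is a torsor over the first cohomology of the complex, namely $J \otimes H^1(X^0, \mathcal End(F^0))$.

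I expect the only real obstacle to be the non-commutative bookkeeping in the second paragraph. Verifying the $2$-cocycle identity for $\theta$ and its coboundary transformation law requires repeatedly reassociating iterated tensor products such as $F'_k \otimes_{A'_k} A'_j \otimes_{A'_j} A'_i$ and identifying $A'_j \otimes_{A'_l} A'_k$ with $A'_{j\cap k}$, which are legitimate precisely because of flatness, the birationality axiom, and Lemma~3.2. Once these identifications are secured, the vanishing of the $J^2$-terms linearizes the whole computation and reduces it to the classical commutative obstruction calculus on $X^0$, with the identification of the relevant \v Cech complex with the one computing $H^p(X^0, \mathcal End(F^0))$ being the standard affine comparison.
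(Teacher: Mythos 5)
Your proposal matches the paper's proof in all essentials: take $F'_i = (A'_i)^{\oplus r}$ so the obstruction lies entirely in the gluing data, lift the $\psi_{ij}$ arbitrarily, measure the failure of the cocycle condition by a $2$-cochain valued in $J \otimes \mathcal End(F^0)$ (the paper writes it additively as $\delta_{ijk} = \psi'_{ij}\psi'_{jk} - \psi'_{ik}$ rather than your multiplicative $\mathrm{id} + \theta_{ijk}$, an immaterial difference), show it moves by a coboundary under change of lifts, and obtain (2) from closed $1$-cochains modulo coboundaries of $0$-cochains exactly as you describe. The one point you gloss over is that these cochains are indexed only by chains $i<j<k$ in the poset rather than by arbitrary triples of the covering, so identifying their cohomology with $H^p(X^0, \mathcal End(F^0))$ is not just the ``standard affine comparison'' but uses the extension lemma the paper cites (\cite{smooth}, Lemma 2.7).
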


\begin{proof}
(1) We express $F$ as $F_i = A_i^{\oplus r}$ with gluing isomorphisms 
$\psi_{ij}: F_j \otimes_{A_j} A_i \to F_i$ 
for $i < j$ which satisfy the cocycle condition.
Let $F'_i = (A'_i)^{\oplus r}$, 
and let $\psi'_{ij}: F'_j \otimes_{A'_j} A'_i \to F'_i$ for $i < j$ 
be any isomorphisms
which induce the $\psi_{ij}$ under the base change to $A$.
There exists an extension $F'$ if and only if there is a choice of the 
$\psi'_{ij}$ which satisfies the cocycle condition:
\[
\psi'_{ij}\psi'_{jk}(m) = \psi'_{ik}(m): F'_k \otimes_{A'_k} A'_i \to F'_i
\]
for $i < j < k$.
Therefore we define $\delta_{ijk} \in J \otimes \text{Hom}_{A^0_i}(F^0_k \otimes_{A^0_k} A^0_i, F^0_i)$ by
\[
\delta_{ijk}(m) = \psi'_{ij}\psi'_{jk}(m) - \psi'_{ik}(m) 
\]
for $i < j < k$.

$\{\delta_{ijk}\}$ is a closed $2$-cochain in the following sense:
\[
\begin{split}
&\psi^0_{ij} \delta_{jkl}(m) - \delta_{ikl}(m) + \delta_{ijl}(m) - \delta_{ijk}\psi^0_{kl}(m) \\
&= \psi'_{ij} (\psi'_{jk}\psi'_{kl}(m) - \psi'_{jl}(m)) - (\psi'_{ik}\psi'_{kl}(m) - \psi'_{il}(m)) \\
&+ (\psi'_{ij}\psi'_{jl}(m) - \psi'_{il}(m)) - (\psi'_{ij}\psi'_{jk}\psi'_{kl}(m) - \psi'_{ik}\psi'_{kl}(m)) = 0
\end{split}
\]
where we used $J (\psi'_{ij} - \psi^0_{ij}) = 0$ and the cocycle condition for the $\psi_{ij}$.
Then we can extend $\{\delta_{ijk}\}$ to a closed $2$-cochain for all $i,j,k$ by \cite{smooth} Lemma 2.7, 
and we define an obstruction class $\xi = [\delta_{ijk}] \in J \otimes H^2(X, \mathcal End(F^0))$.

We can change the gluing isomorphisms 
by a $1$-cochain $\epsilon_{ij} \in J \otimes \text{Hom}_{A^0_i}(F^0_j \otimes_{A^0_j} A^0_i, F^0_i)$:
\[
\psi'_{ij} \mapsto \psi'_{ij} + \epsilon_{ij}.
\]
Then the $2$-cocycle $\{\delta_{ijk}\}$ changes as follows:
\[
\begin{split}
&\delta_{ijk}(m) \mapsto (\psi'_{ij} + \epsilon_{ij}) (\psi'_{jk} + \epsilon_{jk})(m) - (\psi'_{ik} + \epsilon_{ik})(m) \\
&= \delta_{ijk}(m) + \epsilon_{ij} \psi^0_{jk} (m) - \epsilon_{ik}(m) + \psi^0_{ij}\epsilon_{jk}(m).
\end{split}
\]
That is, the $2$-cocycle $\{\delta_{ijk}\}$ changes by the coboundary of the $1$-cochain $\{\epsilon_{ij}\}$. 
Therefore the existence of the extension $F'$ is equivalent to the vanishing of the cohomology class $\xi$.

(2) We assume that $\xi = 0$ and there is one extension $F'$ of $F$.
Then we have $\delta_{ijk} = 0$ for the corresponding choice of the $\psi'_{ij}$.
When we change $\psi'_{ij}$ to $(\psi'_{ij})^1 = \psi'_{ij} + \epsilon_{ij}$, the $\delta_{ijk}$ stay zero if and only if
$\{\epsilon_{ij}\}$ is a closed $1$-cochain.
In the case where $\{\epsilon_{ij}\}$ is a closed $1$-cochain, let $F'_1$ be the corresponding extension. 
There is an isomorphism $F' \to F'_1$ inducing the identity on $F$, if and only if 
there are isomorphisms $F'_i \to (F'_1)_i$ for all $i$ given by $1 + \gamma_i$ for
$\gamma_i \in J \otimes \text{Hom}_{A^0_i}(F^0_i, F^0_i)$ such that
$(1 + \gamma_i) \psi'_{ij} =  (\psi'_{ij} + \epsilon_{ij}) (1 + \gamma_j)$, i.e.,
\[
\epsilon_{ij} = \gamma_i \psi^0_{ij} - \psi^0_{ij}\gamma_j. 
\]
Thus the extension corresponding to a $1$-cocycle $\{\epsilon_{ij}\}$ are isomorphic if and only if 
$\{\epsilon_{ij}\}$ is a coboundary.
Therefore the isomorphism classes of the extensions are classified by the first cohomology classes.
\end{proof}

Next we consider the extensions of pretilting bundles and their endomorphism algebras along NC deformations:

\begin{Thm}
Let $F^0$ be a locally free sheaf of rank $r$ on $X^0$, and 
let $A = (A_i, \phi_{ji})$ be an NC deformation of $A^0 = (A_i, \phi_{ji}^0)$ 
over an Artin local ring $(R, \mathfrak m)$.
Assume that $F^0$ is {\em pretilting}, i.e., $\text{Ext}_{X^0}^p(F^0,F^0) = 0$ for $p > 0$.
Then there is a locally free right $A$-module $F$ which satisfies the following conditions:

(1) $R/\frak m \otimes_R F \cong F^0$, and this property determines $F$ uniquely 
up to isomorphisms inducing the identity on $F^0$.

(2) $\text{End}_A(F)$ is a flat $R$-module and 
$R/\frak m \otimes_R \text{End}_A(F) \cong \text{End}_{X^0}(F^0)$.

(3) $\text{Ext}_A^p(F,F) = 0$ for $p > 0$.
\end{Thm}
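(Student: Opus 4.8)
The plan is to argue by induction on the length of $R$, reducing everything to the case of a small extension $0 \to J \to R' \to R \to 0$ with $\mathfrak m' J = 0$, and to build $F$ up a tower of such extensions starting from $F^0$ over $k$. The one input I need is a reinterpretation of the pretilting hypothesis: since $F^0$ is locally free the local Ext sheaves $\mathcal{E}xt^q_{X^0}(F^0,F^0)$ vanish for $q>0$, so the local-to-global spectral sequence collapses and gives $\text{Ext}^p_{X^0}(F^0,F^0) \cong H^p(X^0,\mathcal End(F^0))$. Hence the assumption $\text{Ext}^p_{X^0}(F^0,F^0)=0$ for $p>0$ is exactly the vanishing $H^p(X^0,\mathcal End(F^0))=0$ for $p>0$, and this is what drives both the deformation theory and the base change below.

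For statement (1) I would feed this vanishing into Theorem \ref{extend F}. At each stage of the tower the obstruction to extending $F$ lies in $J\otimes H^2(X^0,\mathcal End(F^0))=0$, so an extension $F'$ always exists, and by part (2) of that theorem the extensions inducing a given reduction form a torsor over $J\otimes H^1(X^0,\mathcal End(F^0))=0$, so the extension is unique up to isomorphism inducing the identity on $F$. Propagating these isomorphisms up the tower yields existence of $F$ over $R$, unique up to isomorphism inducing the identity on $F^0$.

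For (2) and (3) I would pass to the \v Cech complex $C^\bullet=\text{Hom}_A(F,\check C^\bullet(F))$, whose cohomology is $\text{Ext}^\bullet_A(F,F)$ by Corollary \ref{Cech}; in particular $H^0(C^\bullet)=\text{End}_A(F)$. By Lemma \ref{local} each term is $\text{Hom}_A(F,[F_s])\cong\text{End}_{A_s}(F_s)\cong M_r(A_s)$, which is flat over $R$ because $A_s$ is, so $C^\bullet$ is a finite complex of flat $R$-modules. Reduction modulo $\mathfrak m$ carries $M_r(A_s)$ to $M_r(A^0_s)=\Gamma(U_s,\mathcal End(F^0))$ and the differentials to the ordinary \v Cech differentials, so $k\otimes_R C^\bullet$ is the commutative \v Cech complex of $\mathcal End(F^0)$, with cohomology $H^\bullet(X^0,\mathcal End(F^0))=\text{Ext}^\bullet_{X^0}(F^0,F^0)$ concentrated in degree $0$ by pretilting.

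It then remains to invoke the standard base change lemma for a bounded complex $C^\bullet$ of flat modules over an Artin local ring: if $H^i(k\otimes_R C^\bullet)=0$ for $i\neq 0$, then $H^i(C^\bullet)=0$ for $i\neq 0$, the module $H^0(C^\bullet)$ is flat over $R$, and the natural map $k\otimes_R H^0(C^\bullet)\to H^0(k\otimes_R C^\bullet)$ is an isomorphism. Applied to our $C^\bullet$ this gives at once $\text{Ext}^p_A(F,F)=H^p(C^\bullet)=0$ for $p>0$, which is (3), and that $\text{End}_A(F)=H^0(C^\bullet)$ is flat over $R$ with $k\otimes_R\text{End}_A(F)\cong\text{End}_{X^0}(F^0)$, which is (2). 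I expect the main obstacle to be this final lemma together with the bookkeeping that identifies the reduction of the noncommutative \v Cech complex with the commutative \v Cech complex of $\mathcal End(F^0)$; the lemma itself is again proved by induction on the length of $R$, using that flatness of $C^\bullet$ turns a small extension $0\to J\to R\to\bar R\to 0$ into a short exact sequence of complexes $0\to J\otimes_k(k\otimes_R C^\bullet)\to C^\bullet\to\bar R\otimes_R C^\bullet\to 0$, which one then chases against the inductive hypothesis over $\bar R$.
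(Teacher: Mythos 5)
Your proposal is correct and takes essentially the same approach as the paper: part (1) by feeding the pretilting vanishing of $H^1(X^0,\mathcal End(F^0))$ and $H^2(X^0,\mathcal End(F^0))$ into Theorem \ref{extend F} along a tower of small extensions, and parts (2)--(3) together by computing $\text{Ext}^\bullet_A(F,F)$ via the \v Cech complex of Corollary \ref{Cech}, whose terms $M(r,A_i)$ are $R$-flat and reduce modulo $\frak m$ to the commutative \v Cech complex of $\mathcal End(F^0)$. The only divergence is the final base-change step, where the paper runs the spectral sequence $E_2^{p,q}=\text{Tor}_{-p}^R(R/\frak m, H^q(K^\bullet)) \Rightarrow H^{p+q}(K_0^\bullet)$ with descending induction and Nakayama's lemma, while you prove the equivalent flat base-change lemma by induction on the length of $R$ using the short exact sequence of complexes attached to a small extension; both arguments are standard and valid.
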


\begin{proof}
(1) This is a consequence of Theorem \ref{extend F}.

We will prove (2) and (3) together.
By Corollary \ref{Cech}, we have 
\[
\text{Ext}_A^p(F,F) \cong H^p(\text{Hom}_A(F, \check C^{\bullet}(F))) 
\cong H^p(K^{\bullet})
\]
where $K^{\bullet}$ is a complex of $R$-modules whose $p$-th term is
\[
K^p = \bigoplus_{j_0 < \dots < j_p} \text{End}_{A_{i(j_0) \cap \dots \cap i(j_p)}} (F_{i(j_0) \cap \dots \cap i(j_p)})
\]
because $\text{Hom}_A(F, [F_i]) \cong \text{Hom}_{A_i}(F_i,F_i)$.
If the rank of $F_i$ is $r$, then it is an $R$-algebra of matrices $M(r, A_i)$, the algebra of $r \times r$ matrices
with entries in $A_i$.
Thus we have
\[
R/\frak m \otimes_R K^p \cong K^p_0 :=  \bigoplus_{j_0 < \dots < j_p} \text{End}_{A^0_{i(j_0) \cap \dots \cap i(j_p)}}
(F^0_{i(j_0) \cap \dots \cap i(j_p)}).
\]
We know that $H^p(K^{\bullet}_0) \cong H^p(X, \mathcal End_{X^0}(F^0)) = 0$ for $p > 0$ by the assumption.

We have a spectral sequence:
\[
E_2^{p,q} = \text{Tor}_{-p}^R(R/\frak m, H^q(K^{\bullet})) \Rightarrow H^{p+q}(K_0^{\bullet}).
\]
We will prove that $H^q(K^{\bullet}) = 0$ for $q > 0$ by the descending induction on $q$, noting that  
$K^{\bullet}$ is a bounded complex, because $I$ is a finite set, and $E_2^{p,q} = 0$ for $p > 0$.

Assuming that we have already $H^q(K^{\bullet}) \cong 0$ if $q > q_1$ for an integer $q_1 > 0$, 
we will prove that 
$H^{q_1}(K^{\bullet}) \cong 0$.
Indeed we have $E_2^{p,q} = 0$ for $q > q_1$, hence $E_2^{0,q_1} = 0$ because $H^{q_1}(K_0^{\bullet}) = 0$.
Thus we have $R/\frak m \otimes_R H^{q_1}(K^{\bullet}) = 0$.
It follows that $H^{q_1}(K^{\bullet}) \cong 0$ by Nalayama's Lemma.
Thus we have (3).
Then we have $E_2^{p,0} = 0$ for $p < 0$ and $E_2^{0,0} \cong \text{End}_{A^0}(F^0)$, hence (2).
\end{proof}

We also prove that the generation property extends under NC deformations:

\begin{Lem}
Let $A = (A_i, \phi_{ij})$ be an NC deformation of an algebraic variety $X^0$ over an Artin local ring $(R, \frak m)$
as before, 
let $F$ be a locally free coherent module on $A$, and let $M$ be a bounded above complex 
of coherent modules on $A$ such that $M \not\cong 0$ in $D^-\text{coh}(A)$.
Assume that, for any $N \in D^-\text{coh}(A^0)$, $N \not\cong 0$ implies that 
$R\text{Hom}_{A^0}(R/\frak m \otimes_R F, N) \not\cong 0$.
Then $R\text{Hom}_A(F, M) \not\cong 0$.
\end{Lem}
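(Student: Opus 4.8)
The plan is to reduce the non-vanishing of $R\text{Hom}_A(F,M)$ over $A$ to the non-vanishing over the special fibre $A^0$, where it is exactly the hypothesis, by combining a base change isomorphism with Nakayama's lemma. Write $F^0 = R/\frak m \otimes_R F$ and $M^0 = R/\frak m \otimes^L_R M$. First I would run Nakayama downwards to see that $M \not\cong 0$ forces $M^0 \not\cong 0$ in $D^-\text{coh}(A^0)$: letting $m$ be the top degree with $H^m(M) \ne 0$, one has $H^m(M^0) \cong R/\frak m \otimes_R H^m(M)$, and some component $H^m(M)_i$ is a nonzero finitely generated $A_i$-module. Since $A_i$ is flat over the Artin local ring $R$, the ideal $\frak m A_i$ is nilpotent, hence lies in the Jacobson radical of $A_i$, so Nakayama over $A_i$ gives $H^m(M)_i/\frak m H^m(M)_i \ne 0$; thus $H^m(M^0)_i \ne 0$ and $M^0 \not\cong 0$. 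The hypothesis applied to $N = M^0$ then yields $R\text{Hom}_{A^0}(F^0,M^0) \not\cong 0$.

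The heart of the argument is the base change isomorphism
\[
R/\frak m \otimes^L_R R\text{Hom}_A(F,M) \;\cong\; R\text{Hom}_{A^0}(F^0,M^0).
\]
Since $F$ is locally free of finite rank $r$, Corollary \ref{Cech} computes $R\text{Hom}_A(F,M)$ as the total complex of $\text{Hom}_A(F,\check C^{\bullet}(M))$, whose entries are $\text{Hom}_{A_s}(F_s,M_s) \cong M_s^{\oplus r}$ by Lemma \ref{local}; in particular $\text{Hom}_A(F,-)$ is exact on the \v Cech resolution and commutes with base change precisely because each $F_s$ is free. To turn the derived tensor $R/\frak m\otimes^L_R$ into the ordinary one termwise, I would represent $M$ by a bounded above complex of coherent $A$-modules that are flat over $R$ (locally free modules qualify, each $A_i$ being $R$-flat); then every entry $M_s^{\oplus r}$ is $R$-flat, and because the formation of $[\,\cdot\,]$, the restriction maps $r^M_s$ and the intersection indexing all commute with $-\otimes_R R/\frak m$ (as $A\otimes_R R/\frak m = A^0$), the \v Cech--Hom complex over $A$ reduces modulo $\frak m$ to the \v Cech--Hom complex over $A^0$ of $M^0$, which is $R\text{Hom}_{A^0}(F^0,M^0)$ again by Corollary \ref{Cech}. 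Finiteness of $I$ keeps all complexes bounded, so no convergence problem arises.

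Combining the three inputs finishes the proof at once: the base change isomorphism identifies $R/\frak m\otimes^L_R R\text{Hom}_A(F,M)$ with $R\text{Hom}_{A^0}(F^0,M^0)$, which is nonzero by the first paragraph; and since the derived reduction modulo $\frak m$ of the zero complex is zero, a complex whose reduction is nonzero must itself be nonzero. Hence $R\text{Hom}_A(F,M) \not\cong 0$, as required.

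I expect the main obstacle to be the base change step, and within it the existence of a bounded above $R$-flat (equivalently locally free) representative of $M$ in $\text{Qcoh}(A)$. Unlike the commutative case one cannot simply invoke perfectness of $F$ to transfer the resolution onto $F$, so one must either establish the resolution property for coherent modules on the NC scheme $A$ --- for instance by producing surjections onto $M$ from $R$-flat modules built, via the left adjoint of restriction, out of free $A_s$-modules --- or argue the commutation of $R/\frak m\otimes^L_R$ with the \v Cech--Hom complex through compatible spectral sequences. The remaining verifications, namely that $[\,\cdot\,]$ and the restriction maps base change correctly and that the two Nakayama-type reductions go through, are routine given Lemma \ref{local} and Corollary \ref{Cech}.
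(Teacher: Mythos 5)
Your overall skeleton --- pass to $N = R/\frak m \otimes^L_R M$, show $N \not\cong 0$ by a top-degree plus Nakayama argument, apply the hypothesis, and pull the non-vanishing back through a base change isomorphism --- is exactly the strategy of the paper's proof, and your first paragraph is correct as written. But the step you yourself defer, namely the existence of a bounded above representative of $M$ whose terms are simultaneously $R$-flat and $\text{Hom}_A(F,-)$-acyclic, is the actual content of the paper's proof, and neither of your proposed fills closes it. A global resolution of $M$ by locally free coherent $A$-modules (''locally free modules qualify'') is a resolution property that the paper never establishes and that is problematic in general: even commutatively it can fail for non-quasi-projective varieties, and nothing in the NC setting makes it easier. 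Your fallback, ''surjections onto $M$ from $R$-flat modules built, via the left adjoint of restriction, out of free $A_s$-modules,'' misreads the formalism: by Lemma \ref{local}, $[\,\cdot\,]$ is the \emph{right} adjoint of the restriction $M \mapsto M_s$, so the modules $[N_s]$ receive the canonical maps $r^M_s\colon M \to [M_s]$ and can only be used for resolutions on the right; the paper gives no control of $\text{Hom}_A([N_s], M)$, hence no mechanism for producing such surjections. Note also that even granting a global locally free resolution, its terms would not be $\text{Hom}_A(F,-)$-acyclic (e.g. $\text{Ext}^p_A(F,F)$ is computed by \v Cech cohomology and is a priori nonzero), so you would still need the \v Cech resolution on top of it.

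The paper's fix reverses the order of your two operations: localize first, resolve second. One first replaces $M$ termwise by its \v Cech resolution, so the terms become finite direct sums of modules $[L_i]$ with $L_i$ coherent over $A_i$; these terms are $\text{Hom}_A(F,-)$-acyclic by the Corollary following Lemma \ref{local}, so the naive complex $\text{Hom}_A(F, M^{\bullet})$ computes $R\text{Hom}_A(F,M)$. The $L_i$ need not be $R$-flat, but free resolutions of finite rank always exist \emph{locally} over each $A_i$, with no global hypothesis; feeding them through the exact functor $[\,\cdot\,]$ replaces each $[L_i]$ by a complex of modules $[P_i]$ with $P_i$ free, whose components are free modules over the $R$-flat algebras $A_j$ and hence $R$-free. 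With this representative your base change isomorphism becomes the termwise identity $\text{Hom}_{A^0}(F^0, N^p) \cong R/\frak m \otimes_R \text{Hom}_A(F, M^p)$, and the last inference (a bounded above acyclic complex of flat $R$-modules remains acyclic after $\otimes_R R/\frak m$, so a nonzero reduction forces a nonzero complex) closes the argument exactly as you intended. So the missing idea is precisely this: local free resolutions pushed through the exact coinduction functor $[\,\cdot\,]$ inside the \v Cech resolution, rather than a global locally free resolution of $M$.
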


\begin{proof}
Since the \v Cech resolution have fixed finite length, we may replace $M$ by its \v Cech resolution, so that the terms $M^p$ of $M$ are 
finite direct sums of coherent modules of the forms $[L_i]$.
By taking $A_i$-free resolutions, we may assume that the $L_i$ are free modules of finite ranks, because the 
operation $[ \,\,\,]$ is exact.
It follows in particular that the $M^p$ are free $R$-modules.

Since $M \not\cong 0$, there exists an integer $p_0$ such that $H^{p_0}(M) \not\cong 0$.
We take the largest such $p_0$, noting that $M$ is bounded above.
Let $N = R/\frak m \otimes_R^L M \in D^-\text{coh}(A^0)$.
Since $M^p$ are $R$-free, $N$ consists of terms $N^p = R/\frak m \otimes_R M^p$, and
we have $H^{p_0}(N) \cong R/\frak m \otimes_R H^{p_0}(M) \not\cong 0$.
Thus $N \not\cong 0$.

By the assumption, we have $R\text{Hom}_{A^0}(F^0, N) \not\cong 0$ for $F^0 = R/\frak m \otimes_R F$.
This means that $H^{p_1}(\text{Hom}_{A^0}(F^0, N^{\bullet})) \ne 0$ for some $p_1$.
We take the largest such $p_1$.
We have $\text{Hom}_{A^0}(F^0, N^p) \cong R/\frak m \otimes_R \text{Hom}_A(F, M^p)$
because $\text{Hom}_A(F, [L_i]) \cong A_i^{\oplus rs}$ as $R$-modules for $r = \text{rank}(F)$ and $s = \text{rank}(L_i)$. 
Therefore $H^{p_1}(\text{Hom}_A(F, M^{\bullet})) \ne 0$, hence
$R\text{Hom}_A(F, M) \not\cong 0$.
\end{proof}

%%%%%%%%%%%%%%%%%%%%%%%%%%%%%%%%%%
%%%%%%%%%%%%%%%%%%%%%%%%%%%%%%%%%%
%%%%%%%%%%%%%%%%%%%%%%%%%%%%%%%%%%
\section{derived McKay correspondence}

First we recall the definition of the tilting bundles and the Bondal-Rickard equivalence (\cite{TU}).
Let $f^0: X^0 \to \text{Spec}(B^0)$ be a proper morphism from an algebraic variety to an affine variety over $k$.
A locally free coherent sheaf $F^0$ on $X^0$ is said to be a {\em tilting bundle} if the
following conditions are satisfied:

(a) (pretilting) $\text{Ext}^p_{X^0}(F^0, F^0) = 0$ for $p > 0$.

(b) (generator) For any $x \in D^-\text{coh}(X^0)$, if $x \not\cong 0$, then $R\text{Hom}_{X^0}(F^0, x) \not\cong 0$.

Let $E^0 = \text{Hom}_{X^0}(F^0, F^0)$ be the endomorphism algebra.
Then there is a $B^0$-linear equivalence of triangulated categories:
\[
\Phi: D^b\text{coh}(X^0) \to D^b\text{mod-}E^0
\]
given by $\Phi(x) = R\text{Hom}_{X^0}(F^0, x)$ with its quasi-inverse given by $\Psi(y) = y \otimes^{L}_{E^0} F^0$.

We will prove that the Bondal-Rickard equivalence extends under NC deformations:

\begin{Thm}
Let $f^0: X^0 \to B^0$ and $F^0$ be as above.
We cover $X^0$ by a finite number of affine open subsets.
Let $(R, \frak m)$ be an Artin local algebra with $R/\frak m = k$, and let $A = (A_i,\phi_{ij})$ be an NC deformation of $X^0$
over $R$.

(1) Let $F$ be a locally free coherent module on $A$ which is an extension of $F^0$.
Then $F$ is a tilting bundle in the following sense:

\hskip 1pc (a) $R^p\text{Hom}_A(F, F) = 0$ for $p > 0$.

\hskip 1pc (b) For $x \in D^-\text{coh}(A)$, if $x \not\cong 0$, then $R\text{Hom}_A(F, x) \not\cong 0$.

(2) Let $E = \text{Hom}_A(F, F)$ be the endomorphism algebra.
Then there is a $k$-linear equivalence of triangulated categories:
\[
\Phi: D^b\text{coh}(A) \to D^b\text{mod-}E
\]
given by $\Phi(x) = R\text{Hom}_A(F, x)$ with its quasi-inverse given by $\Psi(y) = y \otimes^{L}_E F$.
\end{Thm}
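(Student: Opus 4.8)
The plan is to read part (1) straight off the results of \S5 and then to run the formal Bondal--Rickard tilting argument for part (2), the two decisive inputs being the self-orthogonality (1)(a) and the generation property (1)(b). For part (1)(a), the vanishing $\text{Ext}_A^p(F,F)=0$ for $p>0$ is exactly the conclusion of the pretilting theorem of \S5 applied to $F^0$, which is pretilting by condition (a) of the definition of a tilting bundle; that theorem also exhibits $F$ as the unique flat extension of $F^0$, so the hypothesis that $F$ is an extension of $F^0$ identifies it with the bundle produced there. For part (1)(b) I would invoke the generation Lemma of \S5: its hypothesis is precisely condition (b) for $F^0$, namely that $R\text{Hom}_{X^0}(F^0,N)\not\cong 0$ for every nonzero $N\in D^-\text{coh}(X^0)$, and its conclusion is $R\text{Hom}_A(F,M)\not\cong 0$ for every nonzero $M\in D^-\text{coh}(A)$, which is (b).

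For part (2) I would first check that $\Phi=R\text{Hom}_A(F,-)$ and $\Psi=-\otimes^L_E F$ are well defined between the bounded categories. Boundedness of $\Phi$ is controlled by Corollary \ref{Cech}: computing $R\text{Hom}_A(F,x)$ through the \v Cech resolution of fixed finite length $d=\#I$ bounds its cohomological amplitude by $d$, and finite generation of each $\text{Ext}^p_A(F,x)$ over $E$ is the analogue of coherence of higher direct images, where properness of $f^0$ enters. For $\Psi$ I would use that $E$ has finite global dimension, so that $D^b\text{mod-}E=\text{perf}(E)$ and $y\otimes^L_E F$ stays bounded and coherent; this finiteness is inherited from $E^0=\text{End}_{X^0}(F^0)$, whose finite global dimension follows from the known tilting equivalence on $X^0$, via the flatness of $E$ over the Artin local ring $R$ and Nakayama's Lemma. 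I would then record the two basic values $\Phi(F)=R\text{Hom}_A(F,F)=E$, concentrated in degree $0$ by (1)(a), and $\Psi(E)=E\otimes^L_E F=F$.

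The equivalence is then formal. The functors form an adjoint pair $\Psi\dashv\Phi$ by tensor--Hom adjunction, with unit $\eta\colon\text{id}\to\Phi\Psi$ and counit $\epsilon\colon\Psi\Phi\to\text{id}$. Since $\eta_E$ is an isomorphism by (1)(a) and the full subcategory of $y$ on which $\eta_y$ is an isomorphism is thick and contains the thick generator $E$ of $D^b\text{mod-}E$, the unit $\eta$ is a natural isomorphism and $\Psi$ is fully faithful. For the counit, I would embed $\epsilon_x$ in a triangle
\[
\Psi\Phi(x)\to x\to C\to\Psi\Phi(x)[1];
\]
applying $\Phi$ and using the triangle identity $(\Phi\epsilon)(\eta\Phi)=\text{id}_{\Phi}$ together with the fact that $\eta$ is an isomorphism shows $\Phi(\epsilon_x)$ is an isomorphism, hence $\Phi(C)=R\text{Hom}_A(F,C)\cong 0$. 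As $C\in D^b\text{coh}(A)\subset D^-\text{coh}(A)$, the generation property (1)(b) forces $C\cong 0$, so $\epsilon$ is an isomorphism. Thus $\Phi$ is fully faithful and $\Psi$ is essentially surjective, so $\Phi$ and $\Psi$ are mutually quasi-inverse equivalences; $k$-linearity is clear from the constructions.

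The main obstacle I anticipate is not this formal argument but the finiteness underlying it: verifying that $\Phi$ and $\Psi$ genuinely preserve the bounded coherent categories. Concretely, the finite length of the \v Cech complex gives boundedness of $\Phi$, but finite generation of its cohomology over $E$ needs the properness of $f^0$, and the boundedness of $\Psi$ (equivalently $D^b\text{mod-}E=\text{perf}(E)$) rests on $E$ having finite global dimension. Establishing that this finite global dimension deforms flatly from $E^0$ over the Artin local ring $R$ is, I expect, the technical heart of the argument, the remaining adjunction bookkeeping being routine.
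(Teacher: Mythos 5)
Part (1) of your proposal is fine and matches the paper: (a) is the pretilting theorem of \S 5 and (b) is the generation lemma of \S 5, exactly as the paper says (``(1) is already proved in the previous section'').

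Part (2), however, has a genuine gap: your entire treatment of $\Psi$ rests on the claim that $E$ has finite global dimension, so that $D^b\text{mod-}E = \text{perf}(E)$, and that this finiteness is ``inherited from $E^0$ \dots via the flatness of $E$ over the Artin local ring $R$ and Nakayama's Lemma.'' This claim is false, and it fails for the most basic reason imaginable: $R$ itself is an Artin local ring, typically non-reduced. Take $X^0 = \text{Spec}(k)$, $F^0 = \mathcal O$, $R = k[\varepsilon]/(\varepsilon^2)$, and the trivial deformation; then $E = R$ has infinite global dimension even though $E^0 = k$ has global dimension $0$. Flat deformation over an Artin base does not preserve finite global dimension --- if anything it destroys it, since $R \subset Z(E)$ and $R$ already has infinite global dimension. (A second, independent problem: the theorem does not assume $X^0$ smooth, and for singular $X^0$ the tilting equivalence does not give $E^0$ finite global dimension either, since $\text{perf}(X^0) \ne D^b\text{coh}(X^0)$.) This flaw propagates: since $\text{gl.dim}\, E = \infty$, the thick subcategory of $D^b\text{mod-}E$ generated by $E$ is $\text{perf}(E) \subsetneq D^b\text{mod-}E$, so your ``thick generator'' argument for the unit $\eta$ being an isomorphism on all of $D^b\text{mod-}E$ breaks down, and so does your boundedness argument for $\Psi(y) = y\otimes^L_E F$. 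Yet the theorem is still true in the example above ($D^b\text{mod-}R \cong D^b\text{mod-}R$), which shows the equivalence must be obtained without perfection.

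The paper's route avoids global dimension entirely. Its key lemma is that $E$ is \emph{noetherian}, proved by induction on the length of $R$ using a small extension $0 \to J \to R' \to R \to 0$ (here properness of $f^0$ enters: $E^0$ is module-finite over $B^0$, hence noetherian, and $L'\cap JE' \subset J\otimes_k E^0$ is finitely generated). Noetherianity lets one define $\Psi: D^-\text{mod-}E \to D^-\text{coh}(A)$ via resolutions by finitely generated free modules; boundedness of $\Phi$ comes from the finite \v Cech complex (as you say); and the remaining work --- in particular showing that $\Psi(y)$ is \emph{bounded} when $y$ is bounded --- is done as in \cite{TU} by the generation property (1)(b) and a truncation/top-cohomology argument, not by identifying $D^b\text{mod-}E$ with $\text{perf}(E)$. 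To repair your proof you would need to replace the finite-global-dimension step by this noetherianity-plus-generation argument.
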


\begin{proof}
(1) is already proved in the previous section.

(2) is proved by the same argument as in \cite{TU}.
By the lemma below, $E$ is noetherian.
It follows that any finitely generated right $E$-module has a resolution by finitely generated free modules.
Hence a functor $\Psi: D^-\text{mod-}E \to D^-\text{coh}(A)$ is defined by $\Psi(y) = y \otimes^{L}_E F$.
On the other hand, $\Phi(x)$ is bounded if $x$ is bounded, 
because the \v Cech complex is bounded thanks to the finiteness of the affine covering.
Therefore the same argument as in \cite{TU} works.
\end{proof}

\begin{Lem}
$E$ is a noetherian $k$-algebra.
\end{Lem}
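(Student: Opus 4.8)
The plan is to reduce the statement to the noetherianity of the special fiber $E^0 = \mathrm{End}_{X^0}(F^0)$ and then lift it across the nilpotent maximal ideal $\mathfrak m$ by a finite filtration argument, using the identification $R/\mathfrak m \otimes_R E \cong E^0$ together with the flatness of $E$ over $R$ established in the theorem of \S5.

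First I would settle the special fiber. Since $f^0$ is proper and $\mathcal End(F^0)$ is coherent on $X^0$, its pushforward to the affine scheme $\mathrm{Spec}(B^0)$ is coherent, so $E^0 = \mathrm{Hom}_{X^0}(F^0,F^0) = \Gamma(\mathrm{Spec}(B^0), f^0_*\mathcal End(F^0))$ is a finitely generated $B^0$-module. Because the endomorphisms are $\mathcal O_{X^0}$-linear, the image of $B^0$ lies in the center of $E^0$, and $B^0$ is a finitely generated commutative $k$-algebra, hence noetherian by the Hilbert basis theorem. A ring that is finitely generated as a module over a central noetherian subring is both left and right noetherian, since any one-sided ideal is in particular a finitely generated submodule over that subring; therefore $E^0$ is noetherian.

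Next I would lift across $\mathfrak m$. As $R$ is Artin local, $\mathfrak m$ is nilpotent, say $\mathfrak m^{n+1}=0$, and $R$ is noetherian. Since $E$ is an $R$-algebra with $R$ acting centrally, the powers $\mathfrak m^j E$ form a finite descending chain of two-sided ideals $E \supseteq \mathfrak m E \supseteq \cdots \supseteq \mathfrak m^{n+1}E = 0$. Centrality of $\mathfrak m$ makes each successive quotient $\mathfrak m^j E/\mathfrak m^{j+1}E$ annihilated by $\mathfrak m$ on both sides, so it is naturally a module over $E/\mathfrak m E \cong E^0$. Choosing $R$-module generators $x_1,\dots,x_s$ of $\mathfrak m^j$ (available because $R$ is noetherian), centrality gives $\mathfrak m^j E = \sum_l x_l E$, so the images of $x_1,\dots,x_s$ generate $\mathfrak m^j E/\mathfrak m^{j+1}E$ as a right $E^0$-module. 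Each graded piece is thus a finitely generated, hence noetherian, right $E^0$-module, and therefore a noetherian right $E$-module.

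I would then conclude that $E$, as a right module over itself, carries a finite filtration with noetherian successive quotients, so it is a noetherian right $E$-module; that is, $E$ is right noetherian, and the identical argument with left ideals yields left noetherianity. The points that require care are the identification $E/\mathfrak m E \cong E^0$, which is precisely the flatness statement of \S5, and the centrality of the $R$-action, which is what makes the $\mathfrak m$-adic filtration multiplicative and turns each associated graded piece into a genuine $E^0$-module; the nilpotence of $\mathfrak m$ means no completion or convergence issue arises. The only genuinely geometric input is the coherence of $f^0_*\mathcal End(F^0)$, i.e. the properness of $f^0$, which I expect to be the one hypothesis one must not drop.
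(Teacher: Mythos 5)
Your proof is correct, but it lifts noetherianity from the special fiber by a different mechanism than the paper. The paper argues by induction on the length of $R$ along small extensions $0 \to J \to R' \to R \to 0$: given a right ideal $L' \subset E'$, it forms the exact sequence $0 \to L' \cap JE' \to L' \to L'E \to 0$, notes that $L'E$ is finitely generated by the inductive hypothesis and that $L' \cap JE' \subset J \otimes_k E^0$ is finitely generated because $E^0$ is noetherian, and lifts generators; this layer-by-layer structure matches the way everything else in the paper (extension of bundles, of $\mathrm{End}$, of flatness) is proved along small extensions. You instead prove the statement in one shot: using centrality of $R$ and nilpotence of $\mathfrak m$, you filter $E$ by the two-sided ideals $\mathfrak m^j E$, observe that each graded piece $\mathfrak m^j E/\mathfrak m^{j+1}E$ is a finitely generated right module over $E/\mathfrak m E \cong E^0$ (generated by the images of finitely many $R$-module generators of $\mathfrak m^j$), hence noetherian, and conclude since a module with a finite filtration by noetherian subquotients is noetherian. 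Your route is a bit more self-contained and in fact establishes the general statement that any algebra over an Artin local ring whose reduction modulo $\mathfrak m$ is noetherian is itself noetherian --- it uses only the identification $E/\mathfrak m E \cong E^0$ from Theorem 5.2(2), not flatness of $E$ over $R$ (your attribution of that identification to ``the flatness statement'' is the one imprecision: it is the base-change part of that theorem, with flatness a separate assertion). Both proofs rest on the same two inputs, namely noetherianity of $E^0$ as a finite module over the central noetherian subring $B^0$ (where properness of $f^0$ enters) and the base-change description of $E$ modulo $\mathfrak m$, so the difference is one of packaging: single filtration versus induction on small extensions.
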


\begin{proof}
We will prove that $E$ is (right) noetherian by induction on the length of $R$.
If $R = k$, then $E^0$ is finitely generated as a $B^0$-module, hence noetherian.
Let $0 \to J \to R' \to R \to 0$ be a small extension, and assume that $E = R \otimes_{R'} E'$ is noetherian.
Let $L' \subset E'$ be a right ideal, and let $L = L'E \subset E$.
Then we have an exact sequence
$0 \to L' \cap JE' \to L' \to L \to 0$.
By the assumption, $L$ is a finitely generated right ideal of $E$.
We lift the generators to elements of $L'$.
On the other hand, $L' \cap JE' \subset J \otimes_k E^0$ is also finitely generated.
By combining the generators, we conclude that $L'$ is finitely generated. 
\end{proof}

%%%%%%%%%%%%%%%%%%%%%%%%%%%%%%%%%%%%%%%%%%
%%%%%%%%%%%%%%%%%%%%%%%%%%%%%%%%%%%%%%%%%%
%%%%%%%%%%%%%%%%%%%%%%%%%%%%%%%%%%%%%%%%%%

Graduate School of Mathematical Sciences, University of Tokyo,
Komaba, Meguro, Tokyo, 153-8914, Japan. 

kawamata@ms.u-tokyo.ac.jp

\end{document}